\newfont{\cyrfnt}{wncyi10 at 11pt}
\newtheorem{thm}{Theorem}[section]
\newtheorem{prop}[thm]{Proposition}
\newtheorem{lemma}[thm]{Lemma}
\newtheorem{cor}[thm]{Corollary}
\theoremstyle{definition}
\newtheorem{remark}[thm]{Remark}
\newtheorem{remarks}[thm]{Remarks}
\newtheorem{example}[thm]{Example}
\newcommand{\C}{\mathbb{C}}
\newcommand{\N}{\mathbb{N}}
\renewcommand{\Im}{\mathrm{Im}\,}
\renewcommand{\Re}{\mathrm{Re}\,}
\def\rg{\mathop{\mathrm{rg}}}
\def\LLL{\mathcal{L}}
\begin{document}

\title
{The growth of a $C_0$-semigroup characterised by its cogenerator}

\author{Tanja Eisner}
\address{Tanja Eisner \newline Mathematisches Institut, Universit\"{a}t T\"{u}bingen
\newline Auf der Morgenstelle 10, D-72076, T\"{u}bingen, Germany}
\email{talo@fa.uni-tuebingen.de}

\author{Hans Zwart} 
\address{Hans Zwart \newline 
Department of Applied Mathematics,
University of Twente \newline P.O. Box 217, 7500 AE Enschede, The Netherlands }
\email{h.j.zwart@math.utwente.nl}

\keywords{$C_0$-semigroups, Banach spaces, Cayley transform of the generator, cogenerator,
  contractivity, (power) boundedness, 
polynomial boundedness} 
\subjclass[2000]{47D06, 47A30, 47A10}

\begin{abstract}
  We characterise contractivity, boundedness and polynomial
  growth for a $C_0$-semi\-group in terms of its cogenerator $V$
  (or the Cayley transform of the generator) or its resolvent.  In
  particular, we extend results of Gomilko and Brenner, Thom\'ee and
  show that polynomial growth of a semigroup implies polynomial
  growth of its cogenerator. As is shown by an example, the
  result is optimal. For analytic semigroups we show that the converse
  holds, i.e., polynomial growth of the cogenerators implies
  polynomial growth of the semigroup.
  In addition, we show by simple examples in $(\C^2,\|\cdot\|_p)$, $p
  \neq 2$, that our results on the characterization of contractivity
  are sharp. These examples also show that the famous
  Foia{\c{s}}--Sz.-Nagy theorem on co\-ge\-ne\-rators of contractive
  $C_0$-semigroups on Hilbert spaces fails in $(\C^2,\|\cdot\|_p)$ for
  $p\neq 2$.
\end{abstract}

\maketitle

\vspace{0.1cm}


\section{Introduction}

In the theory of $C_0$-semigroups, it is a major task to characterise
the asymptotics of the semigroup $(T(t))_{t\geq 0}$ in terms of its
generator $A$. Since $A$ usually is an unbounded operator, one uses
its resolvent $R(\lambda, A)$ as a family of bounded operators.  On
the other hand, the (negative) Cayley transform of $A$ defined as
$$V:=(A+I)(A-I)^{-1},$$
whenever $1\in\rho(A)$, is a bounded operator and determines
$A$ and hence the semigroup uniquely. This operator is called the
\emph{cogenerator} of the semigroup $T(\cdot)$.  

In 1960, Sz.-Nagy and Foia{\c{s}} characterised cogenerators of
contractive $C_0$-semigroups on Hilbert spaces in the following way.
Here and later we denote by $\LLL(H)$ the space of linear bounded
operators on $H$.
\begin{thm}\emph{(Foia{\c{s}}, Sz.-Nagy \cite{szokefalvi/foias:1960}
    or \cite[Theorem III.8.1]{sznagy/foias})} Let $H$ be a Hilbert
  space and $V\in\LLL(H)$. Then $V$ is the cogenerator of a
  contractive $C_0$-semigroup if and only if $V$ is contractive and
  $1\notin P_\sigma(V)$.
\end{thm}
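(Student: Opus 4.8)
The plan is to exploit the Cayley-transform correspondence between the generator $A$ and the cogenerator $V$, together with the Lumer--Phillips theorem: $A$ generates a contraction $C_0$-semigroup if and only if $A$ is densely defined, dissipative, and $\rg(\lambda_0 I-A)=H$ for some $\lambda_0>0$. Throughout I use that for a generator $A$ with $1\in\rho(A)$ one has $V=(A+I)(A-I)^{-1}$, while conversely $A=(V+I)(V-I)^{-1}$ on $\rg(V-I)$; writing an arbitrary $x\in H$ as $x=(A-I)y$ with $y\in D(A)$ turns norm statements about $V$ into estimates for $A$ on $D(A)$, and back.

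For \emph{necessity}, suppose $V$ is the cogenerator of a contraction semigroup with generator $A$. From $V-I=(A+I)(A-I)^{-1}-I=2(A-I)^{-1}$ we see $V-I$ is injective, so $1\notin P_\sigma(V)$. For contractivity I would write $x=(A-I)y$, so that $Vx=(A+I)y$, and use the Hilbert-space identity
\[
\|(A+I)y\|^2-\|(A-I)y\|^2=4\Re\langle Ay,y\rangle\le 0,
\]
the inequality being dissipativity of $A$; hence $\|Vx\|\le\|x\|$ for every $x\in H$.

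For \emph{sufficiency}, assume $V$ is a contraction with $1\notin P_\sigma(V)$, and define $A:=(V+I)(V-I)^{-1}$ on $D(A):=\rg(V-I)$; this is single-valued because $V-I$ is injective. First I would show $D(A)$ is dense: if $z\perp\rg(V-I)$ then $V^*z=z$, and for a contraction on a Hilbert space $V^*z=z$ forces $Vz=z$ (equality case of Cauchy--Schwarz), so $z\in\ker(V-I)=\{0\}$. Next, for $x=(V-I)u\in D(A)$ the computations $\Re\langle Ax,x\rangle=\|Vu\|^2-\|u\|^2\le0$ and $(I-A)x=-2u$ give dissipativity of $A$ and $\rg(I-A)=H$, so Lumer--Phillips applies. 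Finally $(A-I)x=2u$ and $(A+I)x=2Vu$ yield $(A+I)(A-I)^{-1}=V$, i.e.\ $V$ is the cogenerator of the semigroup generated by $A$.

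The algebraic identities relating $A$ and $V$ and the dissipativity computation are routine. The steps that genuinely require $H$ to be a Hilbert space --- and therefore are the crux --- are the identity $\|(A+I)y\|^2-\|(A-I)y\|^2=4\Re\langle Ay,y\rangle$ in the necessity direction and the density of $\rg(V-I)$ in the sufficiency direction; the latter rests on $\ker(V-I)=\ker(V^*-I)$ for contractions, i.e.\ on the fact that a contraction and its adjoint share the same fixed vectors. These are precisely the facts that fail in $(\C^2,\|\cdot\|_p)$ for $p\neq2$, which is why the theorem breaks down there.
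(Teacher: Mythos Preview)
Your argument is correct and is essentially the classical proof of the Foia{\c{s}}--Sz.-Nagy theorem. The paper's proof shares the same core Hilbert-space identity $\|(A+tI)y\|^2-\|(A-tI)y\|^2=4t\,\Re\langle Ay,y\rangle$ but packages it differently: instead of applying Lumer--Phillips directly for the sufficiency direction, it routes through Proposition~\ref{prop:contr-sgrs-via-v-tau} by showing that contractivity of $V=V_1$ forces contractivity of every $V_\tau$, which via $tR(t,A)=\frac{I-V_\tau}{2}$ is the Hille--Yosida estimate. For density of $D(A)=\rg(V-I)$ the paper invokes the mean ergodic theorem, whereas you use the equivalent fact $\ker(V-I)=\ker(V^*-I)$ for Hilbert-space contractions. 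Your route is the more direct one; the paper's detour through the family $(V_\tau)_{\tau>0}$ is deliberate, since the point of Section~\ref{section:v_tau} is precisely to illustrate that this family gives a Banach-space substitute for the theorem.
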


In Hilbert spaces, not only contractivity is preserved by the
cogenerator. Sz.-Nagy and Foia{\c{s}} showed also that a
$C_0$-semigroup is normal, unitary, self-adjoint, isometric,
completely non-unitary and strongly stable if and only if its
cogenerator is normal, unitary, self-adjoint, isometric, completely
non-unitary and strongly stable, respectively (see Sz.-Nagy,
Foia{\c{s}} \cite[Sections III.8-9]{szokefalvi/foias:1960}). Note that
all these results strongly depend on Hilbert space techniques.

The question whether on Hilbert spaces boundedness of a
$C_0$-semigroup implies power boundedness of its cogenerator is still
open. Gomilko \cite{gomilko:2004} and Guo, Zwart \cite{guo/zwart:2006}
showed that this holds for analytic semigroups and for semigroups such
that the semigroup generated by the inverse of the generator is
bounded as well.  Gomilko \cite{gomilko:2004} proved that boundedness
of a semigroup implies that the powers $V^n$ of its cogenerator grow
at most like $\ln(1+n)$.

For Hilbert spaces Guo and Zwart \cite{guo/zwart:2006} proved that
boundedness of the semigroup implies uniform boundedness of
$V^n(V-I)$. On Banach spaces one can only show that $\|V^n\|\leq
c(1+\sqrt{n})$ for some $c$, see Brenner, Thom\'ee \cite{BrTh79}.
Recently, Piskarev, Zwart \cite{PiZw07} proved that this result (and
even the estimate $\|V^n(V-I)\|\leq c(1+\sqrt{n})$) is optimal.  On
the other side, Gomilko, Zwart, Tomilov \cite{gomilko/zwart/tomilov}
showed that on every $l^p$-space, $1<p<\infty$, $p\neq 2$ there exists
a contraction $V$ with $1\notin P_\sigma(V)$ which is not the
cogenerator of a $C_0$-semigroup.
An analogous example on the space $c_0$ follows from Komatsu \cite[pp.
343--344]{komatsu:1966}, see Section \ref{section:examples}.

In this paper we study the connection between contractivity,
boundedness and polynomial growth of a $C_0$-semigroup on a Banach
space and analogous properties of its cogenerator.

We first characterise cogenerators of contractive and bounded
semigroups in terms of the behaviour of the resolvent of the
cogenerator near the point $1$ (Section
\ref{section:charact-resolvent}).  This is an analogon to the
Hille--Yosida theorem for generators.
Then, in Section \ref{section:v_tau}, we generalise the above Foia{\c{s}}--Sz.-Nagy
theorem to Banach spaces using the cogenerator itself and some naturally related operators. Note that although the proofs  in this section are easy, the presented method seems to be promising.
We also discuss the connection with the inverse of a generator and growth of the corresponding
semigroup (see Zwart \cite{zwart:2007-BCP, zwart:2007-SF} 
Gomilko, Zwart \cite{gomilko/zwart:2007}, Gomilko, Zwart, Tomilov
\cite{gomilko/zwart/tomilov}, de Laubenfels \cite{delaubenfels:1988}
for this aspect). 

In Section \ref{section:examples} we present elementary examples of
non-contractive semigroups with contractive cogenerators and conversely
contractive semigroups with non-contractive cogenerators. In
particular, we show that 
the Foia{\c{s}}--Sz.-Nagy theorem fails for 
semigroups on $(\C^2, \|\cdot\|_p)$ with $p\neq 2$. 

Finally, in Section \ref{section:pol-bdd} we show that polynomial
growth of a $C_0$-semigroup on a Banach space implies polynomial
growth of (the powers of) its cogenerator.  We also show that the
provided growth is the best possible. This generalises the result of
Brenner, Thom\'ee \cite{BrTh79} and extends the result of Gomilko
\cite{gomilko:2004} mentioned above.  Conversely, we prove that for
analytic semigroups polynomial growth of the cogenerator is also
sufficient for polynomial growth of the semigroup.



\section{Characterisations  via the resolvent}\label{section:charact-resolvent}

In this section we give a resolvent characterisation of cogenerators of bounded and
contractive $C_0$-semi\-gro\-ups on Banach spaces.
This can be viewed as an analogue of the Hille--Yosida theorem for
generators. 

We first need the following easy lemma.
\begin{lemma}\label{lemma:resolvent}
  Let $X$ be a Banach space and $V\in\LLL(X)$ such that $1\notin
  P_\sigma(V)$. Then the operator $A:\rg(V-I)\to X$ defined by
  $A:=(V+I)(V-I)^{-1}$ is closed and satisfies the following:
  \begin{enumerate}[1)]
  \item $\rho(A)\setminus\{1\}=\left\{\frac{\mu+1}{\mu-1}, 1\neq \mu \in \rho(V)\right\}$;
  \item For $\lambda\in\rho(A)\setminus\{1\}$ one has
  \begin{equation}
    R(\lambda,A)=\frac{1}{\lambda-1} (I-V)R\left(\frac{\lambda+1}{\lambda-1},V\right). 
  \end{equation}
\end{enumerate}
\end{lemma}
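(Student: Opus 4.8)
The plan is to reduce both assertions to a single algebraic identity, keeping careful track of domains, since $A$ is in general unbounded with domain exactly $\rg(V-I)$. First I would record that $1\notin P_\sigma(V)$ means precisely that $V-I$ is injective, so each $x\in\rg(V-I)$ is $x=(V-I)z$ for a unique $z$, and then $Ax=(V+I)z$; thus $Ax+x=2Vz$ and $Ax-x=2z$. Closedness of $A$ would then follow at once: if $x_n=(V-I)z_n\to x$ and $Ax_n\to y$, then $z_n=\tfrac{1}{2}(Ax_n-x_n)\to\tfrac{1}{2}(y-x)=:z$, so $Vz_n\to Vz$ by boundedness of $V$, and passing to the limit in $x_n=(V-I)z_n$ gives $x=(V-I)z\in\rg(V-I)$ and $Ax=(V+I)z=y$.

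Next, for fixed $\lambda\neq1$ I would set $\mu:=\tfrac{\lambda+1}{\lambda-1}$ and use that the Cayley transform is an involution interchanging $1$ and $\infty$, so that $\mu$ is finite, $\mu\neq1$, and $\lambda=\tfrac{\mu+1}{\mu-1}$. The key identity is
\begin{equation*}
  (\lambda I-A)(V-I)z=(\lambda-1)(V-\mu I)z\qquad(z\in X),
\end{equation*}
immediate from $A(V-I)z=(V+I)z$; both 1) and 2) will come out of it. Assuming $\mu\in\rho(V)$, I would set $B:=\tfrac{1}{\lambda-1}(I-V)R(\mu,V)\in\LLL(X)$; since $R(\mu,V)$ is a bijection of $X$, $\rg(B)=\rg(V-I)=D(A)$, and writing $Bw=(V-I)z$ with $z=-\tfrac{1}{\lambda-1}R(\mu,V)w$ the identity gives $(\lambda I-A)Bw=w$, while the same identity gives $B(\lambda I-A)x=x$ for $x=(V-I)z\in D(A)$. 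Hence $\lambda\in\rho(A)$ and $R(\lambda,A)=B$, which is formula (1) and one inclusion in 1).

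For the reverse inclusion I would take $\lambda\in\rho(A)$, $\lambda\neq1$, and $\mu=\tfrac{\lambda+1}{\lambda-1}$, and show $\mu I-V$ is bijective. If $Vz=\mu z$ with $z\neq0$, then $x:=(V-I)z=(\mu-1)z\neq0$ lies in $D(A)$ and $Ax=(V+I)z=(\mu+1)z=\lambda(\mu-1)z=\lambda x$, contradicting $\lambda\in\rho(A)$; so $\mu I-V$ is injective. For surjectivity, given $w\in X$ I would put $x:=-(\lambda-1)R(\lambda,A)w\in D(A)$, write $x=(V-I)z$, and apply the identity to get $(\lambda-1)(V-\mu I)z=(\lambda I-A)x=-(\lambda-1)w$, i.e., $(\mu I-V)z=w$. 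Then $\mu I-V$ is a bijection of $X$, hence $\mu\in\rho(V)$, and this $\mu$ is automatically $\neq1$, as required on the right-hand side of 1).

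I do not expect a real obstacle here; the argument is elementary. The one point requiring care is the bookkeeping of domains — that $A$ and $(V-I)^{-1}$ are defined only on $\rg(V-I)$, while $R(\lambda,A)$ maps $X$ into $D(A)$ and $R(\mu,V)$ is everywhere defined — so that every composition written above is legitimate.
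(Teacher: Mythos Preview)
Your argument is correct and complete; the careful domain bookkeeping is exactly what is needed. The approach is essentially the paper's, since both rest on the same factorisation
\[
  (\lambda I - A)(V-I)=(\lambda-1)\Bigl(V-\tfrac{\lambda+1}{\lambda-1}I\Bigr),
\]
which the paper writes as the operator identity $\lambda I-A=(\lambda-1)\bigl(\tfrac{\lambda+1}{\lambda-1}-V\bigr)(I-V)^{-1}$.

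The difference is one of packaging rather than strategy. For closedness the paper observes $A=I+2(V-I)^{-1}$, which is closed because the inverse of a bounded injective operator is closed and a bounded perturbation preserves closedness; this also shows at once that $1\in\rho(A)$. You verify closedness directly from the graph definition, which is equally short. For the spectral correspondence the paper simply invokes the spectral mapping theorem for the bounded operator $(V-I)^{-1}$ (applied to the M\"obius map $\mu\mapsto\tfrac{\mu+1}{\mu-1}$), whereas you prove both inclusions by hand from the factorisation. Your route is thus more self-contained and keeps the unbounded-operator technicalities explicit; the paper's is terser but relies on a cited black box. Both yield the resolvent formula in the same way.
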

\begin{proof} By $A=I+2(V-I)^{-1}$, $A$ is closed and $1\in \rho (A)$. 
Assertion 1) follows from the spectral mapping theorem for
$(V-I)^{-1}$ (see e.g. Engel, Nagel \cite[Theorem IV.1.13]{engel/nagel:2000}), while assertion 2) follows from 
\begin{eqnarray*}
\lambda I - A &=& (\lambda V - \lambda I -V-I)(V-I)^{-1} \\
            &=& (V(\lambda - 1) - (\lambda +1))(V-I)^{-1}
            = (\lambda -1) \left(\frac{\lambda+1}{\lambda-1} - V\right)(I-V)^{-1}
  \end{eqnarray*}
for every $\lambda\neq 1$.
\end{proof}

We now characterise cogenerators of contractive $C_0$-semigroups on
Banach spaces. The ne\-cessary and sufficient condition uses the behaviour of the resolvent of $V$ near the point $1$. 
\begin{thm}\label{thm:cogen-contr}
Let $X$ be a Banach space and $V\in\LLL(X)$. Then the following
conditions are equivalent.
\begin{enumerate}[(i)]
\item $V$ is the cogenerator of a contraction $C_0$-semigroup on $X$.

\item  $V-I$ is injective and has dense range; $(1,\infty)\in \rho(V)$
  and 
   \begin{equation} 
   \|(I-V)R(\mu,V)\|\leq \frac{2}{\mu+1} \quad \text{for all } \mu>1.
     \end{equation}

\item $V-I$ is injective and has dense range; there exists $\mu_0>1$
  such that $(1,\mu_0)\in\rho(V)$ and 
   \begin{equation} 
   \|(I-V)R(\mu,V)\|\leq  \frac{2}{\mu+1} \quad \text{for all } \mu\in(1,\mu_0).
     \end{equation}
\end{enumerate}
\end{thm}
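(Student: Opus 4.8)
The plan is to prove the equivalence $(i)\Leftrightarrow(ii)\Leftrightarrow(iii)$ by passing through the generator $A=(V+I)(V-I)^{-1}$ and applying the Hille--Yosida theorem, using Lemma \ref{lemma:resolvent} as the bridge between the resolvent of $V$ and the resolvent of $A$. Note that $(ii)\Rightarrow(iii)$ is trivial. For the remaining implications, the key observation is that the substitution $\lambda=\frac{\mu+1}{\mu-1}$ maps $\mu\in(1,\infty)$ bijectively onto $\lambda\in(1,\infty)$ (and $\mu\in(1,\mu_0)$ onto $\lambda\in(\lambda_0,\infty)$ for the corresponding $\lambda_0$), and under this substitution $\lambda-1=\frac{2}{\mu-1}$, so that the formula in Lemma \ref{lemma:resolvent}(2) reads
\begin{equation*}
R(\lambda,A)=\frac{\mu-1}{2}\,(I-V)R(\mu,V).
\end{equation*}
Hence the estimate $\|(I-V)R(\mu,V)\|\le\frac{2}{\mu+1}$ is \emph{exactly} equivalent to $\|R(\lambda,A)\|\le\frac{\mu-1}{2}\cdot\frac{2}{\mu+1}=\frac{\mu-1}{\mu+1}=\frac{1}{\lambda}$, since one checks $\frac{\mu-1}{\mu+1}=\frac{1}{\lambda}$ under the substitution. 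So the resolvent bound for $V$ near $1$ translates precisely into the Hille--Yosida contraction estimate $\|R(\lambda,A)\|\le\frac1\lambda$ for $A$ on a half-line.

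For $(i)\Rightarrow(ii)$: if $V$ is the cogenerator of a contraction semigroup, then by definition $1\notin P_\sigma(V)$ and $A=(V+I)(V-I)^{-1}$ is the generator, so $V-I$ is injective; density of the range of $V-I$ holds because it equals the domain of $A$, which is dense. By Hille--Yosida, $(0,\infty)\subset\rho(A)$ and $\|R(\lambda,A)\|\le\frac1\lambda$; restricting to $\lambda>1$ and using Lemma \ref{lemma:resolvent}(1) gives $(1,\infty)\subset\rho(V)$, and the computation above converts the Hille--Yosida bound into the required estimate on $(I-V)R(\mu,V)$.

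For $(iii)\Rightarrow(i)$: from $V-I$ injective with dense range we get $1\notin P_\sigma(V)$, so Lemma \ref{lemma:resolvent} applies and defines a closed operator $A$ with dense domain $\rg(V-I)$ and $1\in\rho(A)$. The hypothesis $(1,\mu_0)\subset\rho(V)$ gives, via Lemma \ref{lemma:resolvent}(1), that $(\lambda_0,\infty)\subset\rho(A)$ for the appropriate $\lambda_0>1$, and the estimate converts to $\|R(\lambda,A)\|\le\frac1\lambda$ there; then $\|R(\lambda,A)^n\|\le\lambda^{-n}$ on this half-line, which by the Hille--Yosida theorem (the form using a half-line $(\omega,\infty)$ with $\|R(\lambda,A)^n\|\le M(\lambda-\omega)^{-n}$, here $\omega=0$, $M=1$) shows $A$ generates a contraction $C_0$-semigroup. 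Finally $V$ is the cogenerator of this semigroup because $V=(A+I)(A-I)^{-1}$, which follows by inverting the definition $A=(V+I)(V-I)^{-1}$. The main obstacle I anticipate is bookkeeping: verifying that $1\in\rho(A)$ together with a half-line $(\lambda_0,\infty)$ is enough to invoke Hille--Yosida (one needs the resolvent set to actually contain a full right half-line or at least a sequence $\lambda\to\infty$ with the power bound), and tracking the correspondence $\mu_0\leftrightarrow\lambda_0$ carefully so that no spurious spectrum of $A$ near $1$ obstructs the argument; since $1\in\rho(A)$ is already guaranteed by Lemma \ref{lemma:resolvent}, this should go through, but it is the point requiring care.
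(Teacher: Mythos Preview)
Your proposal is correct and follows essentially the same route as the paper: both arguments use Lemma~\ref{lemma:resolvent} to rewrite $R(\lambda,A)$ in terms of $(I-V)R(\mu,V)$ under the M\"obius substitution $\lambda=\frac{\mu+1}{\mu-1}$, observe that the estimate $\|(I-V)R(\mu,V)\|\le\frac{2}{\mu+1}$ is exactly $\|\lambda R(\lambda,A)\|\le 1$, and then invoke the Hille--Yosida theorem. Your discussion is in fact slightly more explicit than the paper's (you spell out the cycle $(i)\Rightarrow(ii)\Rightarrow(iii)\Rightarrow(i)$ and flag the half-line issue for Hille--Yosida), but there is no substantive difference in method.
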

\begin{proof}
We first note that injectivity and dense range of the operator $V-I$
is necessary for every cogenerator $V$. 
Assume now $V-I$ to be injective and to have dense range.    

Define $A:=(V+I)(V-I)^{-1}$ which is densely defined. By the
Hille--Yosida theorem $A$ generates a contraction semigroup if and only
if some $(\lambda_0,\infty)\subset\rho(A)$ and
$\|R(\lambda,A)\|\leq\frac{1}{\lambda}$ for all
$\lambda>\lambda_0\geq 0$. 
Note that for
$\mu:=\frac{\lambda+1}{\lambda-1}$, $\lambda>\lambda_0>1$ holds if and
only if $1<\mu<\mu_0$ for $\mu_0:=\frac{\lambda_0+1}{\lambda_0-1}$. 
Moreover, by Lemma \ref{lemma:resolvent} we have for $1<\mu\in\rho(V)$ that
$1<\lambda:=\frac{\mu+1}{\mu-1}\in\rho(A)$ and 
\begin{equation}\label{eq:res-gen-to-cogen}
  \lambda R(\lambda,A)=\frac{\lambda}{\lambda-1} (I-V)R(\mu,V)=\frac{\mu+1}{2} (I-V)R(\mu,V).
\end{equation}
This proves the equivalence of (i) and (iii). Using the same
arguments one shows (i)$\Leftrightarrow$(ii).
\end{proof}

Analogously, one proves the following resolvent characterisation of
cogenerators of bounded $C_0$-semi\-groups on Banach spaces.
\begin{thm}\label{thm:cogen-bdd}
Let $X$ be a Banach space, $V\in\LLL(X)$ and $M\geq 1$. Then the following
conditions are equivalent.
\begin{enumerate}[(i)]
\item $V$ is the cogenerator of a $C_0$-semigroup $(T(t))_{t\geq 0}$
  on $X$ satisfying $\|T(t)\|\leq M$ for all $t\geq 0$.
\item  $V-I$ is injective and has dense range; $(1,\infty)\in \rho(V)$
  and 
   \begin{equation} 
   \|\left[(I-V)R(\mu,V)\right]^n\|\leq \frac{2^n M}{(\mu+1)^n} \quad
   \text{for all } \mu>1,\ n\in\N.
     \end{equation}

\item $V-I$ is injective and has dense range; there exists $\mu_0>1$
  such that $(1,\mu_0)\in\rho(V)$ and 
   \begin{equation} 
   \|\left[(I-V)R(\mu,V)\right]^n\|\leq  \frac{2^n M}{(\mu+1)^n} \quad \text{for all }
   \mu\in(1,\mu_0), \ n\in\N.
     \end{equation}
\end{enumerate}
\end{thm}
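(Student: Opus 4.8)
The plan is to mimic exactly the proof of Theorem \ref{thm:cogen-contr}, replacing the Hille--Yosida theorem for contraction semigroups by the Hille--Yosida theorem for $C_0$-semigroups with growth bound $\|T(t)\|\leq M$. First I would recall that, as in the previous proof, injectivity and dense range of $V-I$ is necessary for $V$ to be a cogenerator, so one may assume this throughout and define the closed, densely defined operator $A:=(V+I)(V-I)^{-1}$ via Lemma \ref{lemma:resolvent}. The key classical input is the Hille--Yosida characterisation in its iterated-resolvent form: $A$ generates a $C_0$-semigroup with $\|T(t)\|\leq M$ for all $t\geq 0$ if and only if there is $\lambda_0\geq 0$ with $(\lambda_0,\infty)\subset\rho(A)$ and $\|R(\lambda,A)^n\|\leq \dfrac{M}{(\lambda-\omega)^n}$ for all $\lambda>\lambda_0$ and all $n\in\N$, here with $\omega=0$, i.e. $\|(\lambda R(\lambda,A))^n\|\leq M$ for all large $\lambda$ and all $n$.

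The next step is to transport this estimate through the Cayley transform. Using Lemma \ref{lemma:resolvent} and the identity \eqref{eq:res-gen-to-cogen}, for $1<\mu\in\rho(V)$ and $\lambda:=\frac{\mu+1}{\mu-1}$ one has $\lambda\in\rho(A)\setminus\{1\}$ and
\begin{equation*}
  \lambda R(\lambda,A)=\frac{\mu+1}{2}\,(I-V)R(\mu,V),
\end{equation*}
hence, taking $n$-th powers,
\begin{equation*}
  \bigl(\lambda R(\lambda,A)\bigr)^n=\frac{(\mu+1)^n}{2^n}\,\bigl[(I-V)R(\mu,V)\bigr]^n .
\end{equation*}
Thus $\|(\lambda R(\lambda,A))^n\|\leq M$ for all $n$ is equivalent to $\|[(I-V)R(\mu,V)]^n\|\leq \dfrac{2^nM}{(\mu+1)^n}$ for all $n$. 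Finally I would use the change-of-variables bookkeeping already recorded in the proof of Theorem \ref{thm:cogen-contr}: the map $\mu\mapsto\lambda=\frac{\mu+1}{\mu-1}$ is an order-reversing bijection of $(1,\infty)$ onto $(1,\infty)$, so a half-line $(\lambda_0,\infty)\subset\rho(A)$ with $\lambda_0>1$ corresponds to an interval $(1,\mu_0)\subset\rho(V)$ with $\mu_0=\frac{\lambda_0+1}{\lambda_0-1}$, and $(1,\infty)\subset\rho(A)$ corresponds to $(1,\infty)\subset\rho(V)$. Combining these equivalences gives (i)$\Leftrightarrow$(ii) using the full half-line and (i)$\Leftrightarrow$(iii) using a subinterval; note that whenever $A$ generates a $C_0$-semigroup, the full half-line $(0,\infty)$ automatically lies in $\rho(A)$, so one may take $\lambda_0=1$ and recover (ii) from (iii).

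I do not expect a genuine obstacle here: the statement is a routine iteration of the argument used for contractions, and the only mild point requiring care is to invoke the correct (power-)form of the Hille--Yosida theorem and to check that taking $n$-th powers commutes correctly with the scalar factor $\frac{\mu+1}{2}$ in \eqref{eq:res-gen-to-cogen}, which it does since that factor is a scalar. The one thing worth stating explicitly for completeness is that $(I-V)R(\mu,V)$ and $R(\mu,V)(I-V)$ coincide (both equal $A$-resolvent expressions up to the scalar), so that forming powers is unambiguous; this follows because $V$ and $R(\mu,V)$ commute.
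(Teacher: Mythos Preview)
Your proposal is correct and follows exactly the approach indicated by the paper, which simply states that the result is proved ``analogously'' to Theorem~\ref{thm:cogen-contr}; you have filled in precisely the analogous argument, replacing the Hille--Yosida contraction condition by its iterated-resolvent form $\|(\lambda R(\lambda,A))^n\|\leq M$ and taking $n$-th powers of the identity \eqref{eq:res-gen-to-cogen}. The extra remark you make about commutativity of $V$ and $R(\mu,V)$ is a harmless clarification and not needed for the argument.
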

\clearpage

\begin{remarks}
\nopagebreak\mbox{}\nopagebreak
\begin{enumerate}
\item One can replace the intervals $(1,\infty)$ and
$(1,\mu_0)$ in Theorems \ref{thm:cogen-contr} and \ref{thm:cogen-bdd}
by a sequence  of numbers $\mu_n>1$ converging to $1$. 
Indeed, from the proof of the Hille--Yosida theorem follows that it
suffices to check the condition on the resolvent only for a sequence
$\{\lambda_n\}_{n=0}^\infty\subset (0,\infty)$ converging to infinity. 
\item
 Note that $(I-V)R(\mu,V)=I-(\mu-1)R(\mu,V)$. Therefore one can replace the estimates
in (ii) and (iii) in Theorems  \ref{thm:cogen-contr} and
\ref{thm:cogen-bdd} by 
$$\|I-(\mu-1)R(\mu,V)\|\leq \frac{2}{\mu+1}$$ 
or
$$\|\left[I-(\mu-1)R(\mu,V)\right]^n\|\leq \frac{2^n M}{(\mu+1)^n},$$
respectively.
\item
Conditions (ii) and (iii) in Theorem \ref{thm:cogen-bdd} involve
all powers of the resolvent of $V$ and are therefore difficult
to check. Therefore it is desirable to find a simpler (sufficient)
condition on a bounded operator $V$ to be the cogenerator of a bounded $C_0$-semigroup.
\end{enumerate}
\end{remarks}

\vspace{0.05cm}


\section{Characterisation via cogenerators of the rescaled semigroups}\label{section:v_tau}

In this section we study the direct connection between the semigroup
and its cogenerator without using the resolvent.  The simplest example
of such a connection is the Foia{\c{s}}--Sz.-Nagy theorem.  However,
the analogous assertion does not hold on Banach spaces (see Section
\ref{section:examples} for elementary examples).  In our approach we
consider the cogenerators of the rescaled semigroups.

We begin with the following observation. If $A$ generates a
contractive or bounded $C_0$-semigroup, then also all 
operators $\tau A$ do for $\tau>0$.
However, as we will see in Section \ref{section:examples}, it is not always true
that the operators 
\begin{equation}
V_\tau:=(\tau A+I)(\tau A-I)^{-1},\quad \tau>0, 
\end{equation}
remain contractive when $V$ is. 

The following proposition characterises generators of contraction
semigroups in terms of $V_\tau$. 
\begin{prop}\label{prop:contr-sgrs-via-v-tau}
Let $A$ be a densely defined operator on a Banach space $X$. Then the following
assertions are equivalent.
\begin{enumerate}[(i)]
\item $A$ generates a contraction $C_0$-semigroup on $X$.
\item $(0,\infty)\subset \rho(A)$ and the operators $V_\tau$ satisfy 
   \begin{equation*} 
   \|V_\tau-I\|\leq 2 \quad \text{for all } \tau>0.
     \end{equation*}
\item There exists $\tau_0>0$ such that
  $(\frac{1}{\tau_0},\infty)\subset\rho(A)$ and the operators $V_\tau$ satisfy
   \begin{equation*} 
   \|V_\tau-I\|\leq 2 \quad \text{for all } 0<\tau<\tau_0.
     \end{equation*}
\end{enumerate}
\end{prop}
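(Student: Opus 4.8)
The plan is to reduce the entire proposition to the Hille--Yosida theorem by means of a single algebraic identity that expresses $V_\tau-I$ through the resolvent of $A$.

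First I would record that, whenever $\tfrac1\tau\in\rho(A)$ (which holds for the relevant range of $\tau$ in each of (i), (ii), (iii) and is exactly what makes $V_\tau$ well defined), one has
\[
V_\tau - I \;=\; (\tau A+I)(\tau A-I)^{-1} - I \;=\; 2(\tau A-I)^{-1} \;=\; -\frac{2}{\tau}\,R\bigl(\tfrac1\tau,A\bigr),
\]
using only $\tau A+I = (\tau A-I)+2I$. With the substitution $\lambda = 1/\tau$ this gives $\|V_\tau-I\| = 2\lambda\,\|R(\lambda,A)\|$, so that the bound $\|V_\tau-I\|\le 2$ is literally the Hille--Yosida estimate $\|R(\lambda,A)\|\le 1/\lambda$ at the point $\lambda = 1/\tau$. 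I would also note in passing that $\rho(A)\neq\emptyset$ forces $A$ to be closed, so the closedness hypothesis of Hille--Yosida comes for free under (ii) and (iii) (and under (i) anyway).

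With this identity in hand the equivalences are immediate. For (i)$\Rightarrow$(ii): the Hille--Yosida theorem gives $(0,\infty)\subset\rho(A)$ and $\|R(\lambda,A)\|\le 1/\lambda$ for all $\lambda>0$, which translates via the identity into $(0,\infty)\subset\rho(A)$ together with $\|V_\tau-I\|\le 2$ for all $\tau>0$. The implication (ii)$\Rightarrow$(iii) is trivial (take any $\tau_0>0$). For (iii)$\Rightarrow$(i): the hypothesis gives, after the substitution, $(1/\tau_0,\infty)\subset\rho(A)$ and $\|R(\lambda,A)\|\le 1/\lambda$ for all $\lambda>1/\tau_0$, hence $\|R(\lambda,A)^n\|\le\|R(\lambda,A)\|^n\le\lambda^{-n}$ on that half-line; since $A$ is densely defined and closed, the version of Hille--Yosida in which the resolvent condition need only be checked along a sequence $\lambda_n\to\infty$ --- the one already used in the proof of Theorem~\ref{thm:cogen-bdd} and commented on in the first Remark following it --- yields that $A$ generates a contraction $C_0$-semigroup. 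Concretely, the Yosida approximants $A_{\lambda_n}$ then satisfy $\|e^{tA_{\lambda_n}}\|\le 1$ and converge strongly to the semigroup.

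I do not anticipate any genuine obstacle: once the identity $V_\tau-I = -\tfrac{2}{\tau}R(1/\tau,A)$ is written down, the statement is nothing but Hille--Yosida for contractions read through the substitution $\lambda\leftrightarrow 1/\tau$. The only two points deserving a word of justification are the automatic closedness of $A$ from $\rho(A)\neq\emptyset$, and, in (iii), the use of the form of Hille--Yosida in which the resolvent estimate is imposed only near $\lambda=\infty$ rather than on all of $(0,\infty)$.
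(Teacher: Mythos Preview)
Your proposal is correct and follows essentially the same route as the paper: the paper's proof consists of the single identity $V_\tau = I - 2tR(t,A)$ for $t=1/\tau$, hence $tR(t,A)=\tfrac{I-V_\tau}{2}$, and then an appeal to the Hille--Yosida theorem. Your version is simply more explicit about closedness of $A$ and about the form of Hille--Yosida needed for (iii)$\Rightarrow$(i), points the paper leaves implicit (the latter is addressed in Remark~\ref{remarks:sgr-via-v-tau}).
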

\begin{proof}
By the formula 
\begin{equation*} 
   V_\tau = (A+tI)(A-tI)^{-1}=I-2tR(t,A)
    \end{equation*}
for $t:=\frac{1}{\tau}$ we immediately obtain that 
\begin{equation}\label{eq:resolvent-via-v-tau} 
   tR(t,A)=\frac{I-V_\tau}{2}.
    \end{equation}
Then the proposition follows from the Hille--Yosida theorem.
\end{proof}
We now obtain the Foia{\c{s}}--Sz.-Nagy theorem as a
corollary of the above proposition. 
\begin{cor} 
Let $V\in\LLL(H)$ for a Hilbert space $H$. Then $V$ is the cogenerator
of a contractive $C_0$-semigroup if and only if $V$ is contractive and
$1\notin P_\sigma(V)$.
\end{cor}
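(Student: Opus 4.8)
The plan is to view the statement through the operator $A:=(V+I)(V-I)^{-1}$ and reduce everything to Proposition~\ref{prop:contr-sgrs-via-v-tau}. The only ingredients not already contained in that proposition are two genuinely Hilbert space facts: the spectrum of a contraction lies in $\ol{\mathbb D}$, and the polarisation identity
\[
\|(A+I)x\|^2-\|(A-I)x\|^2=4\Re\langle Ax,x\rangle\qquad(x\in D(A)),
\]
which links contractivity of a Cayley transform to dissipativity of its preimage. Recall from Lemma~\ref{lemma:resolvent} (and $A=I+2(V-I)^{-1}$) that $1\in\rho(A)$ and $(A+I)(A-I)^{-1}=V$ as soon as $V-I$ is injective.

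First I would treat sufficiency: assume $V$ is a contraction with $1\notin P_\sigma(V)$. I claim $\rg(V-I)$ is dense. Indeed, if $V^*x=x$ with $\|x\|=1$, then $\langle Vx,x\rangle=\langle x,V^*x\rangle=1$, so by Cauchy--Schwarz and $\|V\|\le1$ we get $Vx=x$, contradicting $1\notin P_\sigma(V)$; hence $\ker(V^*-I)=\ol{\rg(V-I)}^{\perp}=\{0\}$ and $A$ is densely defined. Since $\|V\|\le1$ forces $\sigma(V)\subseteq\ol{\mathbb D}$ and $|t+1|>|t-1|$ exactly for $t>0$, we have $\frac{t+1}{t-1}\in\rho(V)$ for all $t\in(0,\infty)\setminus\{1\}$, so Lemma~\ref{lemma:resolvent} together with $1\in\rho(A)$ yields $(0,\infty)\subset\rho(A)$. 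Writing $y=(V-I)u$ for $y\in\rg(V-I)=D(A)$, we have $(A+I)y=2Vu$ and $(A-I)y=2u$, so the polarisation identity gives $\Re\langle Ay,y\rangle=\|Vu\|^2-\|u\|^2\le0$; thus $A$ is dissipative, whence $\|(tI-A)x\|\ge t\|x\|$ and $\|R(t,A)\|\le 1/t$ for $t>0$. Since $V_\tau-I=-2tR(t,A)$ for $t=\tfrac1\tau$ (as recorded in the proof of Proposition~\ref{prop:contr-sgrs-via-v-tau}), this gives $\|V_\tau-I\|\le2$ for all $\tau>0$, and the implication (ii)$\Rightarrow$(i) of that proposition shows that $A$ generates a contraction $C_0$-semigroup, whose cogenerator is $(A+I)(A-I)^{-1}=V$.

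For necessity, let $V$ be the cogenerator of a contraction $C_0$-semigroup with generator $A$, so $1\in\rho(A)$ and hence $V-I=2(A-I)^{-1}$ is injective, i.e. $1\notin P_\sigma(V)$. Proposition~\ref{prop:contr-sgrs-via-v-tau}, (i)$\Rightarrow$(ii), gives $\|V_\tau-I\|\le2$, i.e. $\|R(t,A)\|\le 1/t$ for all $t>0$; expanding $\|(tI-A)x\|^2\ge t^2\|x\|^2$ and letting $t\to\infty$ shows $A$ is dissipative, and the polarisation identity then gives $\|Vy\|=\|(A+I)x\|\le\|(A-I)x\|=\|y\|$ for $y=(A-I)x$, $x\in D(A)$; since $1\in\rho(A)$ these $y$ exhaust $H$, so $\|V\|\le1$. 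The main obstacle, or rather the only place where a Banach space argument genuinely breaks down, is the use of Hilbert space structure in the density of $\rg(V-I)$ and in the polarisation identity; apart from that, the only thing requiring care is the bookkeeping between $V$, $V_\tau$ and $A$, which is already carried out in Proposition~\ref{prop:contr-sgrs-via-v-tau}.
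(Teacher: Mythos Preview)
Your proof is correct and shares the paper's core idea: reduce to Proposition~\ref{prop:contr-sgrs-via-v-tau} via the polarisation identity $\|(A+tI)y\|^2-\|(A-tI)y\|^2=4t\Re\langle Ay,y\rangle$. The organisation differs in two small ways. First, for density of $\rg(V-I)$ the paper appeals to the mean ergodic theorem, whereas you give the direct fixed-point argument that $\ker(V^*-I)\subset\ker(V-I)$ for contractions; your route is more self-contained. Second, the paper phrases the forward direction as ``contractivity of $V_\tau$ is independent of $\tau$'' (this is exactly equation~(\ref{eqnarray:v_tau-hilbert})), which is the same computation as your dissipativity step but applied at every $\tau$ simultaneously, and then feeds $\|V_\tau\|\le 1\Rightarrow\|V_\tau-I\|\le 2$ into Proposition~\ref{prop:contr-sgrs-via-v-tau}; this makes explicit the point highlighted afterwards in the paper, namely that on Hilbert spaces contractivity of one $V_\tau$ forces contractivity of all of them. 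For the converse the paper invokes Lumer--Phillips directly to get dissipativity of $A$, while you recover dissipativity from the resolvent bound via Proposition~\ref{prop:contr-sgrs-via-v-tau} and a limit $t\to\infty$; both are fine, yours is just a slightly longer detour.
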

\begin{proof} Assume that $V$ is contractive and $1\notin
  P_\sigma(V)$. 
By Lemma \ref{lemma:resolvent}, the operator
  $A:=(V+I)(V-I)^{-1}$ satisfies $(0,\infty)\subset
  \rho(A)$. Moreover, it is densely defined by the mean ergodic
  theorem, see Yosida \cite[Theorem VIII.3.2]{yosida-book}. 

By Proposition \ref{prop:contr-sgrs-via-v-tau} it is
  enough to show that contractivity of $V=V_1$ implies that all operators $V_\tau$, $\tau>0$, are contractive. Take
  $\tau>0$ and $x\in X$. For  $t:=\frac{1}{\tau}$ and
  $y:=-R(t,A)x=(A-tI)^{-1}x$ we have
\begin{eqnarray}
\|V_\tau x\|^2-\|x\|^2&=&\|(A+tI)(A-tI)^{-1}x\|^2 -  \|x\|^2 \label{eqnarray:v_tau-hilbert} \\
&=&\langle (A+tI)y, (A+tI)y \rangle - \langle (A-tI)y, (A-tI)y \rangle =
4t \Re \langle Ay,y \rangle, \nonumber
  \end{eqnarray}  
so contractivity of $V_\tau$ is independent of $\tau$. 

Conversely, if $V$ is the cogenerator of a contractive
$C_0$-semigroups with generator $A$, then contractivity of $V$ follows
from (\ref{eqnarray:v_tau-hilbert}) for $\tau=1$ and the Lumer--Phillips
theorem. Moreover, by $I-V=2R(1,A)$ we have $1\notin P_\sigma(V)$. 
\end{proof}
\begin{remark}
As we see from the above proof, the following nice property holds for
cogenerators of $C_0$-semigroups on Hilbert spaces:
Contractivity of $V$ automatically implies contractivity of every
$V_\tau$, $\tau>0$. As we will see in the next sections, on Banach
spaces this property fails in general.  
\end{remark}

A result analogous to Proposition \ref{prop:contr-sgrs-via-v-tau} 
holds for generators of bounded $C_0$-semigroups as well.  
\begin{prop}\label{prop:bdd-sgrs-via-v-tau}
Let $A$ be a densely defined operator on a Banach space $X$. Then the following
assertions are equivalent.
\begin{enumerate}[(i)]
\item $A$ generates a $C_0$-semigroup $(T(t))_{t\geq 0}$ satisfying
  $\|T(t)\|\leq M$ for every $t\geq 0$.

\item $(0,\infty)\subset\rho(A)$ and the operators $V_\tau$ satisfy 
   \begin{equation*} 
   \left\|\left[\frac{V_\tau-I}{2}\right]^{n}\right\|\leq M \quad \text{for all }
   \tau>0 \text{ and } n\in\N.
     \end{equation*}

\item There exists $\tau_0$ such that
  $(\frac{1}{\tau_0},\infty)\subset\rho(A)$ and the operators $V_{\tau}$ satisfy  
   \begin{equation*} 
   \left\|\left[\frac{V_\tau-I}{2}\right]^{n}\right\|\leq M \quad \text{for all }
   0<\tau<\tau_0 \text{ and } n\in\N.
     \end{equation*}
\end{enumerate}
\end{prop}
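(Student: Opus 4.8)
The plan is to mimic the proof of Proposition~\ref{prop:contr-sgrs-via-v-tau}, replacing the Hille--Yosida characterisation of contraction semigroups by the Hille--Yosida characterisation of bounded semigroups. Recall that the latter states: a densely defined operator $A$ generates a $C_0$-semigroup with $\|T(t)\|\leq M$ for all $t\geq 0$ if and only if some $(\lambda_0,\infty)\subset\rho(A)$ and $\|\lambda^n R(\lambda,A)^n\|\leq M$ for all $\lambda>\lambda_0$ and all $n\in\N$ (and in fact, by the proof of that theorem, it suffices to check this along any sequence $\lambda_k\to\infty$; this remark underlies the equivalence with (iii)).

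First I would record the elementary identity already used in the proof of Proposition~\ref{prop:contr-sgrs-via-v-tau}: for $\tau>0$ and $t:=\frac{1}{\tau}$ one has $V_\tau = (A+tI)(A-tI)^{-1} = I - 2tR(t,A)$, hence
\begin{equation*}
   \frac{I-V_\tau}{2} = tR(t,A),
\end{equation*}
and therefore $\left[\frac{V_\tau-I}{2}\right]^n = (-1)^n\left[\frac{I-V_\tau}{2}\right]^n = (-1)^n t^n R(t,A)^n$, so that
\begin{equation*}
   \left\|\left[\frac{V_\tau-I}{2}\right]^n\right\| = \|t^n R(t,A)^n\|.
\end{equation*}
Note also that $t=\frac{1}{\tau}\in\rho(A)$ exactly when $V_\tau$ is well defined, and that $(0,\infty)\subset\rho(A)$ is equivalent to all $V_\tau$, $\tau>0$, being defined, while $(\frac{1}{\tau_0},\infty)\subset\rho(A)$ is equivalent to $V_\tau$ being defined for all $0<\tau<\tau_0$.

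With these identities in hand the three equivalences are immediate. For (i)$\Leftrightarrow$(ii): $A$ generates a semigroup bounded by $M$ iff $(0,\infty)\subset\rho(A)$ and $\|t^nR(t,A)^n\|\leq M$ for all $t>0$, $n\in\N$, which by the displayed identities is exactly condition (ii) after the substitution $t=\frac{1}{\tau}$ (as $\tau$ ranges over $(0,\infty)$ so does $t$). For (i)$\Leftrightarrow$(iii): by the remark following the Hille--Yosida theorem it is enough to verify the resolvent power estimate along a sequence $t_k\to\infty$; taking $t_k=\frac{1}{\tau_k}$ with $\tau_k\downarrow 0$, $\tau_k<\tau_0$, translates this into the estimate in (iii), and conversely (iii) supplies such a sequence. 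This also shows (ii)$\Rightarrow$(iii) trivially and closes the cycle.

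There is essentially no obstacle here: the only point requiring a little care is the reduction in (iii) to a sequence $\tau_k\to 0$ rather than all small $\tau$, which is handled by citing the proof of the Hille--Yosida theorem (as was already done in the Remarks after Theorem~\ref{thm:cogen-bdd} and implicitly in Proposition~\ref{prop:contr-sgrs-via-v-tau}). One should also remark that no injectivity/dense-range hypothesis on $V_\tau-I$ is needed because here $A$ — not $V$ — is the given object, and $V_\tau-I = -2tR(t,A)$ is automatically injective with dense range (equal to $D(A)$, which is dense by assumption) whenever $t\in\rho(A)$.

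\begin{proof}
For $\tau>0$ with $t:=\frac{1}{\tau}\in\rho(A)$ we have, as in the proof of Proposition~\ref{prop:contr-sgrs-via-v-tau},
\begin{equation*}
   V_\tau = (A+tI)(A-tI)^{-1} = I - 2tR(t,A),
\end{equation*}
hence $\frac{I-V_\tau}{2}=tR(t,A)$ and consequently
\begin{equation*}
   \left\|\left[\frac{V_\tau-I}{2}\right]^{n}\right\| = \left\|\left[\frac{I-V_\tau}{2}\right]^{n}\right\| = \|t^{n}R(t,A)^{n}\| \qquad (n\in\N).
\end{equation*}
Moreover $V_\tau$ is defined precisely when $\frac{1}{\tau}\in\rho(A)$, so that $(0,\infty)\subset\rho(A)$ is equivalent to all $V_\tau$ ($\tau>0$) being defined, and $(\frac{1}{\tau_0},\infty)\subset\rho(A)$ is equivalent to $V_\tau$ being defined for all $0<\tau<\tau_0$.

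By the Hille--Yosida theorem, $A$ (densely defined) generates a $C_0$-semigroup with $\|T(t)\|\leq M$ for all $t\geq 0$ if and only if $(0,\infty)\subset\rho(A)$ and $\|t^{n}R(t,A)^{n}\|\leq M$ for all $t>0$ and $n\in\N$. Substituting $t=\frac{1}{\tau}$ and using the identity above, the latter condition is exactly (ii). This proves (i)$\Leftrightarrow$(ii), and (ii)$\Rightarrow$(iii) is trivial.

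Finally assume (iii). From the proof of the Hille--Yosida theorem it suffices, in order to conclude that $A$ generates a semigroup bounded by $M$, to verify that $\|t^{n}R(t,A)^{n}\|\leq M$ for all $n\in\N$ along some sequence $t_k\to\infty$. Choosing $\tau_k\downarrow 0$ with $\tau_k<\tau_0$ and setting $t_k:=\frac{1}{\tau_k}$, the estimate in (iii) gives $\|t_k^{n}R(t_k,A)^{n}\| = \left\|\left[\frac{V_{\tau_k}-I}{2}\right]^{n}\right\|\leq M$ for all $n\in\N$ and all $k$. Hence (iii)$\Rightarrow$(i).
\end{proof}
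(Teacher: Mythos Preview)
Your proof is correct and follows exactly the approach indicated in the paper: it combines the identity $tR(t,A)=\frac{I-V_\tau}{2}$ (formula~(\ref{eq:resolvent-via-v-tau})) with the Hille--Yosida theorem for bounded semigroups. The paper merely states this in one line, while you have spelled out the substitution $t=\frac{1}{\tau}$ and the reduction to a sequence for (iii), but the argument is the same.
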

\noindent The proof follows from  formula (\ref{eq:resolvent-via-v-tau})
and the Hille--Yosida theorem for bounded semigroups. 

Propositions \ref{prop:contr-sgrs-via-v-tau} and
\ref{prop:bdd-sgrs-via-v-tau} imply the following sufficient 
condition on Banach spaces being analogous to the one of Foia{\c{s}} and Sz.-Nagy.  
\begin{thm}\label{thm:growth-via-v-tau}
Let $A$ be densely defined on a Banach space $X$.
Then the following assertions hold.
\begin{enumerate}[(a)]
\item If there exists $\tau_0>0$ such that
  $(\frac{1}{\tau_0},\infty)\subset \rho(A)$ and the operators
  $V_\tau$ are contractive for every $\tau\in(0,\tau_0)$, then $A$
  generates a contractive $C_0$-semigroup.
\item If there exists $\tau_0>0$ such that
  $(\frac{1}{\tau_0},\infty)\subset \rho(A)$ and the operators
  $V_\tau$ satisfy $\|V_\tau^n\|\leq M$ for all $\tau\in(0,\tau_0)$
  and $n\in\N$, then $A$ generates a $C_0$-semigroup $(T(t))_{t\geq
    0}$ with $\|T(t)\|\leq M$ for all $t\geq 0$.
\end{enumerate}
\end{thm}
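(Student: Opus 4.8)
The plan is to read both parts off the already established Propositions \ref{prop:contr-sgrs-via-v-tau} and \ref{prop:bdd-sgrs-via-v-tau}. In both (a) and (b) the spectral hypothesis $(\frac{1}{\tau_0},\infty)\subset\rho(A)$ is exactly the one appearing in part (iii) of those propositions, and $A$ is densely defined by assumption, so the only thing left to check is the relevant norm bound on the $V_\tau$.

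For (a), contractivity of $V_\tau$ immediately gives
\[
\|V_\tau-I\|\le\|V_\tau\|+\|I\|\le 2\qquad(0<\tau<\tau_0),
\]
which is condition (iii) of Proposition \ref{prop:contr-sgrs-via-v-tau}; hence $A$ generates a contraction $C_0$-semigroup. (Part (a) is, up to this routing, the case $M=1$ of (b), but going through the contraction proposition is cleaner.)

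For (b), the one genuine step is to turn uniform power boundedness of $V_\tau$ into the Hille--Yosida-type bound for $\frac{V_\tau-I}{2}$. Expanding with the binomial theorem,
\[
\left[\frac{V_\tau-I}{2}\right]^{n}=\frac{(-1)^n}{2^n}\sum_{k=0}^{n}\binom{n}{k}(-1)^k V_\tau^{\,k},
\]
so, using $\|V_\tau^{\,k}\|\le M$ for $0\le k\le n$ (with $V_\tau^{0}=I$, $\|I\|=1\le M$) together with $\sum_{k=0}^{n}\binom{n}{k}=2^{n}$, the triangle inequality yields
\[
\left\|\left[\frac{V_\tau-I}{2}\right]^{n}\right\|\le\frac{1}{2^n}\sum_{k=0}^{n}\binom{n}{k}M=M
\]
for every $0<\tau<\tau_0$ and $n\in\N$. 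This is precisely condition (iii) of Proposition \ref{prop:bdd-sgrs-via-v-tau}, which then produces a $C_0$-semigroup $(T(t))_{t\ge0}$ with $\|T(t)\|\le M$. I do not expect a serious obstacle here; the only point worth isolating is the binomial estimate, which is what makes the passage from powers of $V_\tau$ to powers of the scaled resolvent $tR(t,A)=\frac{I-V_\tau}{2}$ cost nothing in the constant $M$.
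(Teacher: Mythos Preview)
Your proof is correct and matches the paper's argument essentially verbatim: part (a) is deduced from Proposition~\ref{prop:contr-sgrs-via-v-tau} via the triangle inequality $\|V_\tau-I\|\le 2$, and part (b) from Proposition~\ref{prop:bdd-sgrs-via-v-tau} via the binomial expansion of $\left[\frac{V_\tau-I}{2}\right]^n$ together with $\sum_{j}\binom{n}{j}=2^n$. The paper's only difference is that it states (a) ``follows immediately'' without writing out the one-line inequality you gave.
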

\begin{proof}
Assertion (a) follows immediately from Proposition
\ref{prop:contr-sgrs-via-v-tau}. To prove (b) assume that
$\|V_\tau^n\|\leq M$. Then we have
\begin{equation*}
\left\|\left[\frac{V_\tau-I}{2}\right]^{n}\right\|\leq \frac{1}{2^n}
\sum_{j=0}^n 
\left({ 
\begin{matrix} n  \\ j \end{matrix}} \right)
\|V_\tau^j\| \leq \frac{M\cdot 2^n}{2^n}=M
\end{equation*}
and (b) follows from Proposition \ref{prop:bdd-sgrs-via-v-tau}. 
\end{proof}
\begin{remark}\label{remarks:sgr-via-v-tau}  
In Proposition
\ref{prop:contr-sgrs-via-v-tau}, Proposition
\ref{prop:bdd-sgrs-via-v-tau} and Theorem \ref{thm:growth-via-v-tau} 
it suffices to consider $\{V_{\tau_n}\}_{n=1}^\infty$ for a sequence
$\tau_n>0$ converging to zero.
This again follows from the fact that in the Hille--Yosida theorem it
suffices to check the resolvent condition only for a sequence
$\{\lambda_n\}_{n=1}^\infty\subset (0,\infty)$ converging to infinity, which follows directly
from its proof.
\end{remark}

We finish this section by the following observation.  If $V$ is
contractive or power bounded, then so is the operator $-V$. Note that
$-V$ is the cogenerator of the semigroup generated by $A^{-1}$ if
$A^{-1}$ generates a $C_0$-semigroup. However, contractivity or
boundedness of $(e^{tA})_{t\geq 0}$ does not imply the same property
of $(e^{tA^{-1}})_{t\geq 0}$ (see Zwart \cite{zwart:2007-BCP} and also
Section \ref{section:examples} for elementary examples).
We refer to Zwart \cite{zwart:2007-BCP, zwart:2007-SF}, 
Gomilko, Zwart \cite{gomilko/zwart:2007}, Gomilko, Zwart, Tomilov \cite{gomilko/zwart/tomilov}, de Laubenfels
\cite{delaubenfels:1988} for further information on this aspect.

Moreover, we have the following relation.
\begin{remark}
Assume that $(0,\infty)\subset \rho (A)$ and $A^{-1}$ exists as a
densely defined operator.
Then we have
\begin{equation}
V_{\tau,A^{-1}}=(\tau A^{-1}+I)(\tau A^{-1}-I)^{-1}=(\tau I +A)(\tau I-A)^{-1}=-V_\frac{1}{\tau}.
\end{equation}
So we see that contractivity (uniform power boundedness) of $V_\tau$
for \textit{all} $\tau>0$ or even for some sequences $\tau_{n,1}\to 0$ and $\tau_{n,2}\to \infty$
implies that $A$ and $A^{-1}$ both generate a contractive (bounded)
$C_0$-semigroup. 
\end{remark}
Conversely, Gomilko \cite{gomilko:2004} and Guo,
Zwart \cite{guo/zwart:2006} showed for Hilbert spaces that if $A$ and
$A^{-1}$ both generate bounded semigroups, then the cogenerator $V$ is
power bounded (and hence so are all operators $V_\tau$ by the rescaling argument).  

It is an interesting and open question whether contractivity (boundedness) of the
semigroups generated by $A$ and $A^{-1}$ implies contractivity (power
boundedness) of the cogenerator $V$ on Banach spaces.

\section{Examples}\label{section:examples}


In \cite{gomilko/zwart/tomilov} Gomilko, Zwart and Tomilov 
show that for every $p\in [1,\infty)$, $p\neq 2$, there exists a
contractive operator $V$ on $l^p$ such that $(V-I)^{-1}$ exists as a
densely defined operator, but $V$ is not a cogenerator of a $C_0$-semigroup. 

The idea of their construction is the following. One considers the generator $A:=S_l-I$ for the left
shift $S_l$ given by $S_l(x_1,x_2,x_3,\ldots)=(x_2,x_3,\ldots)$.  The
corresponding cogenerator $V=S_lR(2,S_l)$ is a contraction by
contractivity of $S_l$ and the Neumann series for the
resolvent. Further, one shows that $A^{-1}$ does not generate a
$C_0$-semigroup which is the hard part. As a consequence one obtains
that the contraction $-V$ is not a cogenerator of a $C_0$-semigroup.  

Komatsu \cite[pp. 343--344]{komatsu:1966} showed that the operator
$A:=S_r-I$ for the right shift $S_r$ given by $S_r(x_1,x_2,x_3,\ldots)=(0,x_1,x_2,\ldots)$ 
on $c_0$ satisfies the same properties, i.e.,
$A^{-1}$ does not generate a $C_0$-semigroup. Since the
cogenerator $V$ corresponding to $A$ is contractive as well,
we have a contraction on $c_0$ which is not a cogenerator of a 
$C_0$-semigroup. 
%


The following example shows that even for $X=\C^2$ the semigroup
cogenerated by a contraction need not to be
contractive. Note that the cogeneration property is no problem here.

In particular, this example 
and Example \ref{ex:contr-sgr-noncontr-cogen} show 
that none of the implications in the
Foia{\c{s}}--Sz.-Nagy theorem holds 
even on two-dimensional Banach spaces. 
%
\begin{example}\label{ex:matrix-2x2}
Take $X=\C^2$ considered with $\|\cdot\|_p$, $p\neq 2$,
and
$A:=\left(\begin{smallmatrix} -1 & \beta \\  0 & -2 \end{smallmatrix}\right)$
for $\beta >0$.
The semigroup generated by $A$ is 
\begin{equation*}
T(t)=\left( 
\begin{matrix}
  e^{-t} & \beta (e^{-t} - e^{-2t})\\
  0      & e^{-2t}     
  \end{matrix}
\right),\quad t\geq 0.
\end{equation*}

We first show that $(T(t))_{t\geq 0}$ is not contractive for
appropriate $\beta$. 
Consider first $p=\infty$ and $\beta>1$. 
We have $\|T(t)\|= (1+\beta)e^{-t}-\beta e^{-2t}=:f(t)$. Since
$f(0)=1$ and $f'(0)=\beta -1>0$, the semigroup is not contractive.

Let now $2<p<\infty$ and define $\beta :=(3^p-1)^\frac{1}{p}$. Then   
\begin{equation*}
\left\Vert T(t)\left({ 
\begin{matrix} x  \\ 1 \end{matrix}} \right)\right\Vert_p^p= (e^{-t}x
    + \beta (e^{-t}-e^{-2t}))^p + e^{-2pt}=:f_x(t) \quad \text{for } x>0. 
 \end{equation*}
We have $f_x(0)=x^p + 1= \|(x,1)\|_p^p$.
Further, $f'_x(0)=px^{p-1}(\beta-x)-2p$, so the semigroup is not
contractive if $x^{p-1}(\beta-x)>2$ for some $x>0$. This is the case
for $x:=\frac{\beta}{2}$. Indeed, 
$x^{p-1}(\beta-x)=\left(\frac{\beta}{2}\right)^p=\frac{3^p-1}{2^p}>2$
for $p>2$.  

We now show that the cogenerator $V$ is contractive for $\beta\leq 3$
if $p=\infty$ and $\beta:=(3^p-1)^\frac{1}{p}$ if
$p\in(2,\infty)$. The cogenerator is given by 
\begin{equation*}
V=(I+A)(A-I)^{-1}=
\left(\begin{matrix} 0 & \beta \\  0 & -1 \end{matrix}\right)
\left(\begin{matrix} -\frac{1}{2} & -\frac{\beta}{6} \\  0 & -\frac{1}{3} \end{matrix}\right)
=\left(\begin{matrix} 0  &  -\frac{\beta}{3} \\ 0 &  \frac{1}{3}  \end{matrix}\right).
\end{equation*}
So for $p=\infty$ we have $\|V\|=\max\{\frac{1}{3},
\frac{\beta}{3}\}\leq 1$ for $\beta\leq 3$. 
For $p\in(2,\infty)$ we have $\|V\|^p=\|(-\frac{\beta}{3},
\frac{1}{3})\|_p^p=\frac{(\beta^p+1)}{3^p}\leq 1$ if and only if
$\beta\leq (3^p-1)^\frac{1}{p}$.

We see that for $p\in(2,\infty]$ there exists a contraction 
such that the cogenerated semigroup is not contractive. The analogous
assertion for $p\in[1,2)$ follows by duality. 
\end{example}
\begin{remark}
  \mbox{}From Theorem \ref{thm:growth-via-v-tau}, Remark \ref{remarks:sgr-via-v-tau} and the above example
  we see that there exist contractions $V$ (even on $\C^2$ with
  $l^p$-norm, $p\neq 2$) such that $V_\tau$ are not contractive for
  every $\tau$ in a small interval $(0,\tau_0)$.
\end{remark}

\vspace{0.1cm}


The following example gives a class of contractive semigroups with
non-contractive cogene\-rators and shows that such semigroups exist
even on $(\C^2,\|\cdot\|_\infty)$.  In particular, this provide a
two-dimensional counterexample to the converse implication in the
Foias--Sz.-Nagy theorem.
\begin{example}\label{ex:contr-sgr-noncontr-cogen}
Every operator $A$ generating a contractive
$C_0$-semigroup such that $A^{-1}$ generates a $C_0$-semigroup which
is not contractive leads to an example of a contractive semigroup with
non-contractive cogenerator. Indeed, by the previous remark, there
exists $\tau>0$ such that $V_\tau$ is not contractive. Therefore, the
operator $\tau A$ generates a contractive semigroup with
non-contractive cogenerator. 

For a concrete example consider $X:=\C^2$ endowed with $\|\cdot\|_\infty$
and $A$ as in Example \ref{ex:matrix-2x2}. Then
$(e^{tA})_{t\geq 0}$ is not contractive for $\beta>1$. 
%
%
We show that the semigroup generated by $A^{-1}$ is contractive if and only if $\beta\leq 2$. 

Indeed, we have $A^{-1}=\left(\begin{smallmatrix} -1 & -\frac{\beta}{2} \\
    \ 0 & -\frac{1}{2} \end{smallmatrix}\right)$ and
\begin{equation*}
e^{tA^{-1}}=\left( 
\begin{matrix}
  e^{-t} & \beta (e^{-t} - e^{-\frac{t}{2}})\\
  0      & e^{-\frac{t}{2}}
  \end{matrix}
\right),\quad t\geq 0.
  \end{equation*}
Therefore
$\|e^{tA^{-1}}\|_\infty=\sup\{e^{-t}+\beta(e^{-\frac{t}{2}}-e^{-t}),e^{-\frac{t}{2}}\}$.
Hence $e^{tA^{-1}}$ is contractive if and only if
$g(t):=e^{-t}+\beta(e^{-\frac{t}{2}}-e^{-t})\leq 1$ for every $t>0$.  
We have $g(0)=1$ and
$g'(t)=-e^{-t}+\beta(e^{-t}-\frac{1}{2}e^{-\frac{t}{2}})
=e^{-t}[\beta(1-\frac{1}{2}e^{\frac{t}{2}})-1]$.
Since the function $t\to 1-\frac{1}{2}e^{\frac{t}{2}}$ is monotonically decreasing, we obtain that $g'(t)\leq 0$ for every
$t\geq 0$ is equivalent to $g'(0)\leq 0$, i.e., $\beta\leq 2$.

So we see that for $1<\beta\leq 2$ the semigroup generated by $A^{-1}$ is 
contractive while the semigroup generated by $A$ is not
contractive. The rescaling procedure described above leads to a
contractive semigroup (generated by $\tau A$ for some $\tau$) with
non-contractive cogenerator.   

Zwart \cite{zwart:2007-BCP} gives another example of an operator $A$ generating a contractive
$C_0$-semigroup such that the semigroup generated by $A^{-1}$ is not
contractive and even not bounded. He takes a nilpotent semigroup on $X=C_0[0,1]$ such that the semigroup generated
by $A^{-1}$ grows like $t^{1/4}$. By the rescaling procedure we again obtain a contractive semigroup with non-contractive cogenerator. 


\end{example}
\begin{remark} The above example for $2<\beta\leq 3$ yields a
  contractive cogenerator $V$ such that the semigroups generated by both operators $A$ and
  $A^{-1}$ are not contractive.  
This gives an example of a contraction $V$ on
$(\C^2,\|\cdot\|_\infty)$ such that operators $V_\tau$ are not
contractive for every $\tau\in(0,\tau_1)\cup(\tau_2,\infty)$,
$0<\tau_1<1<\tau_2$, by Remark \ref{remarks:sgr-via-v-tau}.
\end{remark}

\vspace{0.02cm}


\section{Polynomial growth}\label{section:pol-bdd}

In this section we investigate the connection between polynomial growth of 
a $C_0$-semigroup and of its cogenerator.

We first recall that a $C_0$-semigroup $T(\cdot)$ (a bounded operator
$V$) is said to be of \textit{polynomial growth}\/ if $\|T(t)\|\leq
p(t)$ ($\|V^n\|\leq p(n)$) holds for some polynomial $p$ and every
$t\geq 0$ ($n\in\N$).

This property has been characterised via the resolvent of the
generator in Malejki \cite{malejki:2001}, Eisner \cite{eisner:2005},
Eisner, Zwart \cite{eisner/zwart:2007}. See also Gomilko
\cite{gomilko:1999}, Shi and Feng \cite{shi/feng:2000} for the
boundedness case.

The following result shows that polynomial growth of a
$C_0$-semigroup on a Banach space implies polynomial growth of
its cogenerator.  This generalises a result of Hersch and Kato
\cite{HeKa79} and Brenner and Thom\'ee \cite{BrTh79} on bounded
semigroups. For the proof of this result, we need the following
estimate.
\begin{lemma}
  \label{la:Laguerre} Let $L^1_n(t)$ denote the first generalised
  Laguerre polynomial, i.e.,
  \begin{equation}
    \label{eq:5.2}
      L_n^1(t) = \sum_{m=0}^n \frac{(-1)^m}{m!} \left(\begin{array}{c} n+1\\ n-m \end{array} \right) t^m.
  \end{equation} 
  Then we have for fixed $k \in {\mathbb N}$ that 
  \begin{equation}
    \label{eq:5.0}
     C_1 n^k\sqrt{n}\leq \int_0^{\infty} |L_n^1(2t)| e^{-t} t^k dt \leq C_2 n^k\sqrt{n}
  \end{equation}
  for some constants $C_1, C_2$ and all $n\in \N$.
\end{lemma}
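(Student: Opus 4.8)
The plan is to reduce the estimate to the known asymptotics of the Laguerre polynomials. First I would record the classical generating-function / explicit-formula facts about $L_n^1$. One convenient starting point is the identity $L_n^1(t) = \sum_{j=0}^n L_j^0(t)$ (equivalently $L_n^1(t)-L_{n-1}^1(t)=L_n(t)$), together with the fact that the ordinary Laguerre polynomials $L_n = L_n^0$ are orthonormal in $L^2(0,\infty;e^{-t}\,dt)$. This already handles the lower bound: since $\int_0^\infty L_n^1(2t)\,L_n^1(2t)\,e^{-t}t^k\,dt$ is, after the substitution $s=2t$, a finite linear combination of integrals $\int_0^\infty L_i(s)L_j(s)e^{-s/2}s^k\,ds$ whose leading ($k$-independent, diagonal) contribution grows like $n$, one gets $\int_0^\infty |L_n^1(2t)|^2 e^{-t}t^k\,dt \asymp n^{k+1}$, and then Cauchy--Schwarz against $\int_0^\infty e^{-t}t^k\,dt = k!$ is the wrong direction; instead I would combine the $L^2$-lower bound with an $L^\infty$-type pointwise bound on $|L_n^1(2t)|$ to force mass into the region $t\asymp n$, yielding $\int_0^\infty |L_n^1(2t)|e^{-t}t^k\,dt \geq C_1 n^{k}\sqrt n$.

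For the upper bound I would use the sharp uniform asymptotics for Laguerre polynomials with $e^{-t/2}$ weight (the Fejér–type / Askey–Wainger estimates): for $\alpha=1$ one has, uniformly in $t>0$,
\begin{equation*}
  e^{-t/2}|L_n^1(t)| \leq C
  \begin{cases}
    (nt)^{1/2} & 0\leq t \leq 1/n,\\
    (nt)^{-1/4} & 1/n \leq t \leq n/2,\\
    n^{-1/4}\bigl(n^{1/3}+|n-t|\bigr)^{-1/4} & n/2\leq t\leq 3n/2,\\
    e^{-\gamma t} & t\geq 3n/2,
  \end{cases}
\end{equation*}
for some $\gamma>0$. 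Substituting $s=2t$ and splitting $\int_0^\infty |L_n^1(2t)|e^{-t}t^k\,dt$ according to these four ranges, each piece is an elementary integral: the bulk range $1/n \lesssim t \lesssim n$ contributes $\int (nt)^{-1/4} e^{-t/2} t^k\,dt$, whose integrand is $\lesssim n^{-1/4} t^{k-1/4} e^{-t/2}$ and hence $\lesssim n^{-1/4}\cdot$const — but that is $o(n^k\sqrt n)$, which shows the dominant contribution actually comes from the Airy/turning-point range $t\asymp n$, where $e^{-t/2}|L_n^1(2t)| \lesssim n^{-1/4}(n^{1/3}+|n-t|)^{-1/4}$ and $t^k \asymp n^k$; integrating over $|n-t|\lesssim n$ gives $n^k \cdot n^{-1/4}\cdot \int_0^{n}(n^{1/3}+u)^{-1/4}\,du \asymp n^k\cdot n^{-1/4}\cdot n^{3/4} = n^k\sqrt n$. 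The remaining ranges ($t\lesssim 1/n$ and $t\gtrsim n$) contribute lower-order terms, so one obtains $\int_0^\infty |L_n^1(2t)|e^{-t}t^k\,dt \leq C_2 n^k\sqrt n$.

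The main obstacle is getting the lower bound $C_1 n^k\sqrt n$ with the correct constant-free power of $n$: an $L^2$ estimate alone overshoots, so one must genuinely localize the $L^1$-mass of $|L_n^1(2t)|e^{-t/2}$ near the turning point $t\asymp n$. I would do this either by the same Airy asymptotics used for the upper bound (which give matching two-sided bounds on $[n/2,3n/2]$, and the oscillation there is on scale $\gtrsim 1$ so $|L_n^1|$ genuinely has size $\asymp n^{-1/4}|n-t|^{-1/4}$ on a positive-measure set for each window), or, to keep the argument self-contained, via the integral representation $L_n^1(t) = \frac{e^{t}}{n!}\int_0^\infty e^{-s} s^{n} (\text{something})\,ds$ and a stationary-phase/Laplace analysis. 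Once the pointwise two-sided bounds on the turning-point range are in hand, both inequalities in \eqref{eq:5.0} follow by the elementary integration described above, with $t^k\asymp n^k$ throughout that range.
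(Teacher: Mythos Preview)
Your route is genuinely different from the paper's. The paper never uses pointwise Laguerre asymptotics; it exploits the fact that the Laplace transform of $-2L_{n-1}^1(2t)e^{-t}$ equals $\left(\tfrac{s-1}{s+1}\right)^n - 1$. For the upper bound it differentiates $k$ times (to pick up the factor $t^k$), then applies Carlson's inequality $\|f\|_1 \leq 2\sqrt{\|f\|_2\|tf\|_2}$ together with Parseval on the imaginary axis, where $|(s-1)/(s+1)|=1$, to obtain $\|f\|_2 = O(n^k)$ and $\|tf\|_2 = O(n^{k+1})$. For the lower bound it identifies $\|f\|_1$ with an $L^\infty$-Fourier-multiplier norm and invokes an oscillatory-integral estimate from Brenner--Thom\'ee--Wahlbin. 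Your Askey--Wainger approach can certainly be made to work and is arguably more direct, but two slips in the execution need correcting.

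First, the four-region bounds you quote are the standard ones for the \emph{normalised} Laguerre functions $(n+1)^{-1/2}\, t^{1/2} e^{-t/2} L_n^1(t)$, not for $e^{-t/2}|L_n^1(t)|$ itself; the two differ by a factor $\sqrt{(n+1)/t}$, so in the oscillatory zone the correct bound is $e^{-t/2}|L_n^1(t)| \lesssim n^{1/4} t^{-3/4}$, not $(nt)^{-1/4}$. (The turning point sits near $\nu = 4n+4$, not $n$, but that is harmless.) Second, and more damagingly, you double-count the exponential weight: after the substitution $s=2t$ the integral is $\int |L_n^1(s)|e^{-s/2}s^k\,ds$, so once $e^{-s/2}|L_n^1(s)|$ has been bounded pointwise there is \emph{no} residual $e^{-s/2}$ left in the integrand. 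Your claim that the oscillatory range contributes only $O(n^{-1/4})$ is therefore wrong; with the correct bookkeeping $\int_{1/\nu}^{\nu/2} n^{1/4} s^{k-3/4}\,ds \asymp n^{k+1/2}$, the same order as the turning-point range. Both regions contribute at the leading order, and the oscillatory region --- where the Szeg\H{o} asymptotic $e^{-t/2}L_n^1(t) \sim \pi^{-1/2} n^{1/4} t^{-3/4}\cos\bigl(2\sqrt{nt}-\tfrac{3\pi}{4}\bigr)$ is sharp --- is also the simplest place to extract the lower bound, since the cosine has average absolute value $2/\pi$ over each period.
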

\begin{proof}
  We begin by showing that the integral can be bounded from below by a
  constant times $n^k \sqrt{n}$. For this we use the idea in
  \cite{BrTW75}.
  
  The Laplace transform of $g(t):=-2 L_{n-1}^1(2t) e^{-t}$ equals
  \[
  G(s) = \left(\frac{s-1}{s+1}\right)^n -1.
  \]
  Thus the Laplace transform of $f(t):=g(t)t^k$, $k\geq 1$, is given
  by
  \begin{equation}
  \label{eq:5.5}
    F(s)= (-1)^k \frac{d^k}{ds^k} G(s) = \sum_{m=1}^k
    \left(\frac{s-1}{s+1}\right)^{n-m} P_{n,m}\left(\frac{1}{s+1}\right)
  \end{equation}
  where $P_{n,m}$ are polynomials with order $\leq 2^k$.
  
  Since $\frac{s-1}{s+1}$ has norm one on the imaginary axis, we can
  write
  $F_m(s):=\left(\frac{s-1}{s+1}\right)^{n-m}\frac{1}{(s+1)^{\ell_m}}$
  on the imaginary axis as $\frac{1}{(i\omega +1)^{\ell_m}}
  e^{i(n-m)\Phi(\omega)}$ for some real-valued function $\Phi$.
  Furthermore, $\Phi''(\omega)$ is non-zero for almost all $\omega$.
  Note that $\Phi(\omega)$ equals $-2\arctan(\omega)+\pi$. From
  Corollary 1.5.1 of \cite{BrTW75}, we know that the induced
  multiplier norm\footnote{In the formulation of this corollary it is
    assumed that the function in front of $e^{i(n-m)\Phi(\omega)}$ has
    compact support. However, this is not used in the proof of this
    corollary.  In the proof it is assumed that there exists a
    $C^{\infty}$ function, $\chi$, with compact support such that
    $\chi$ divided by the function (in front of the exponential) has
    compact support, lies in $C^{\infty}$, and $\Phi''$ has no zeros
    in the support of $\chi$} of $\frac{1}{(s+1)^{\ell_m}}
  \left(\frac{s-1}{s+1}\right)^{n-m}$ on $L^{\infty}$ is larger or
  equal to $c_m \sqrt{n}$ for some constant $c_m$. This induced norm
  equals the $L^1(0,\infty)$-norm of the function $f_m(t)$, i.e., the
  inverse Laplace transform of $F_m(s)$.  The constant
  $C_{n,k}=\frac{n!}{(n-k)!}  \alpha_k$ appearing in $P_{n,k}$ is the
  one with the highest power of $n$. Thus for $n$ large, the
  $L^1(0,\infty)$-norm of $f$ behaves like the $L^1(0,\infty)$-norm of
  $C_{n,k} f_k(t)$, which is larger or equal to $C_1(n^k \sqrt{n})$.

  This proves that there exists a lower bound for the
  $L^1(0,\infty)$-norm of $f$ which is of the order $n^{k}\sqrt{n}$.
  To prove the upper bound, we use the Carlson estimate, see \cite{BrTh79}:
  \begin{equation}
  \label{eq:5.6}
    \|f\|_1 \leq 2 \sqrt{\|f\|_2\|tf\|_2},
  \end{equation}
  where $\|\cdot\|_1$, $\|\cdot\|_2$ denote the $L^1(0,\infty)$-norm
  and $L^2(0,\infty)$-norm, respectively.
  For completeness we include the proof of this estimate. Take $c>0$
  and observe
  \begin{eqnarray*}
  \int_0^{\infty} | f(t) |dt 
  &=& \int_0^c|f(t)|dt + \int_c^{\infty} |f(t)|dt \\
  &\leq& \sqrt{c} \|f\|_2 + \sqrt{\int_c^{\infty} \frac{1}{t^2} dt 
  \int_{c}^{\infty} |tf(t)|^2 dt}
  \leq \sqrt{c} \|f\|_2 + \sqrt{\frac{1}{c}} \|tf(t)\|_2,
  \end{eqnarray*}
  where we have used the Cauchy-Schwarz estimate twice.
  Choosing $c=\frac{\|t f\|_2}{\|f\|_2}$, we find (\ref{eq:5.6}).
  
  So to obtain a $L^1(0,\infty)$-norm estimate of $f(t):=L_n^1(2t) e^{-t} t^k$ we must
  estimate the $L^2(0,\infty)$-norm of this function and of $t$
  times it.
  
  In order to estimate the $L^2(0,\infty)$-norms, we use Parseval
  identity for the Fourier transform, i.e., $\|f\|_2^2 =
  \frac{1}{2\pi} \|\hat{f}\|_2^2$. Furthermore, the Fourier transform
  of $f$ equals the Laplace transform of $f$ restricted to the
  imaginary axis. Finally, we have that the Fourier transform of
  $tf$ equals $i(\hat{f})'(\omega)$.

  Using (\ref{eq:5.5}), we see that 
  \begin{equation}
    \label{eq:5.7}
    \|\hat{f}\|_2 \leq \sum_{m=1}^{k} \left\| \left(\frac{i\omega-1}{i
    \omega+1}\right)^{n-m} P_{n,m}\left(\frac{1}{i\omega +1}\right)
   \right \|_2 = \sum_{m=1}^{k} \left\|P_{n,m}\left(\frac{1}{i\omega +1}\right) \right\|_2,
  \end{equation}
  where we have used that $\frac{i\omega-1}{i \omega+1}$ has absolute
  value one. So we must estimate the $L^2(-\infty,\infty)$-norm of
  $P_{n,m}\left(\frac{1}{i\cdot +1}\right)$. Since we are interested
  in the behaviour with respect to $n$, and since the order of this
  polynomial is independent of $n$, we may look at the coefficients.
  Again we have that $P_{n,k}$ has the coefficient $C_{n,k} =
  \frac{n!}{(n-k)!}\alpha_k$ with of the highest power of $n$. So
  combining this with (\ref{eq:5.7}) we see that
  \begin{equation}
    \label{eq:5.8}
    \|f\|_2 = \frac{1}{\sqrt{2\pi}}\|\hat{f}\|_2 \leq \gamma_1 n^k
  \end{equation}
  for some contant $\gamma_1$. 
  
  We further have that $F'(s)$ equals
  \[
    F'(s) = \sum_{m=1}^k \left(\frac{s-1}{s+1}\right)^{n-m-1}
  \frac{1}{(s+1)^2} \left[ 2(n-m) P_{n,m} \left(\frac{1}{s+1}\right) -
    \left(\frac{s-1}{s+1}\right)P_{n,m}'\left(\frac{1}{s+1}\right)
  \right].
  \]
  By a similar argument as above, we have that the $L^2$-norm on the
  imaginary axis of this function is bounded by a constant times
  $n^{k+1}$. Hence
  \begin{equation}
    \label{eq:5.9}
    \|t f\|_2 = \frac{1}{\sqrt{2\pi}} \|i(\hat{f})'\|_2\leq \gamma_2 n^{(k+1)}.
  \end{equation}
  Combining (\ref{eq:5.6}), (\ref{eq:5.8}) and (\ref{eq:5.9}) shows
  that $\|f\|_1 \leq C_2 n^{k} \sqrt{n}$.
\end{proof}
\begin{thm}
  \label{thm:5.1-new}
  Let $T(\cdot)$ be a $C_0$-semigroup on a Banach space with
  cogenerator $V$. If $\|T(t)\|\leq M(1+t^k)$ for some $M$ and
  $k\in\N\cup\{0\}$ and every $t\geq 0$, then $\|V^n\|\leq C_1
  n^{k+\frac{1}{2}}$ some $C_1$ and every $n\in\N$.

  Furthermore, this estimate cannot be improved, i.e., for every
  $k\in\N\cup\{0\}$ there exists a Banach space and a $C_0$-semigroup
  satisfying $\|T(t)\|=O(t^k)$, $t$ large, such that $\|V^n\|\geq C_2
  n^{k+\frac{1}{2}}$ for some $C_2>0$ and every $n\in\N$.
\end{thm}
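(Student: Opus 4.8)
The plan is to reduce both assertions to the classical integral representation of the powers of the cogenerator in terms of generalised Laguerre polynomials, and then to invoke Lemma~\ref{la:Laguerre}.
\emph{Step 1 (integral representation).} Since $\|T(t)\|\le M(1+t^k)$, the growth bound of $T(\cdot)$ is $0$, so $\sigma(A)\subseteq\{\Re\lambda\le 0\}$; by the spectral mapping theorem for $V=(A+I)(A-I)^{-1}$ (cf.\ Lemma~\ref{lemma:resolvent}(1)) we get $\sigma(V)\subseteq\{z\in\C:|z|\le 1\}$, hence $(I-zV)^{-1}=\sum_{n\ge 0}z^nV^n$ for $|z|<1$. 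A computation analogous to the one in Lemma~\ref{lemma:resolvent} gives $(I-zV)^{-1}=\frac{1}{1-z}I-\frac{2z}{(1-z)^2}R\big(\frac{1+z}{1-z},A\big)$, and for $|z|<1$ one has $\Re\frac{1+z}{1-z}=\frac{1-|z|^2}{|1-z|^2}>0$, so $R\big(\frac{1+z}{1-z},A\big)=\int_0^\infty e^{-\frac{1+z}{1-z}t}T(t)\,\d t$. Extracting the coefficient of $z^n$ by Cauchy's formula on a small circle $|z|=\varepsilon$, interchanging the (finite) contour integral with $\int_0^\infty$ — legitimate because $\Re\frac{1+z}{1-z}$ is bounded below on $|z|=\varepsilon$, so the Laplace integral converges uniformly there — and evaluating the resulting inner contour integral via the Laguerre generating function $\sum_{m\ge 0}L_m^1(2t)z^m=(1-z)^{-2}e^{-2tz/(1-z)}$, we obtain
$$ V^n \;=\; I-2\int_0^\infty L_{n-1}^1(2t)\,e^{-t}\,T(t)\,\d t, \qquad n\ge 1 . $$
(Summing the Laguerre series first and then integrating is \emph{not} admissible, since $|L_m^1(2t)|\lesssim (m+1)e^{t}$ exactly cancels the decay $e^{-t}$; this is why one swaps the contour integral instead. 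The integral above nevertheless converges absolutely, by Lemma~\ref{la:Laguerre}.)

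\emph{Step 2 (upper bound).} From the representation, $\|V^n\|\le 1+2M\int_0^\infty|L_{n-1}^1(2t)|e^{-t}(1+t^k)\,\d t$. Splitting $1+t^k$ and applying the upper estimate of Lemma~\ref{la:Laguerre} to the exponents $0$ and $k$ gives $\|V^n\|\le 1+2MC_2\big((n-1)^0+(n-1)^k\big)\sqrt{n-1}\le C_1 n^{k+\frac12}$, which is the first assertion.

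\emph{Step 3 (optimality).} For $k=0$ take $X_0=L^1(\R)$ with the (isometric) translation semigroup $T_0$. By Step~1, $V_0^n$ is convolution with the measure $\delta_0-2L_{n-1}^1(2\cdot)e^{-\cdot}\Eins_{[0,\infty)}$, and since the operator norm of a convolution on $L^1(\R)$ equals the total variation of the convolving measure, $\|V_0^n\|=1+2\int_0^\infty|L_{n-1}^1(2t)|e^{-t}\,\d t\ge C_2\sqrt{n}$ by Lemma~\ref{la:Laguerre}; this reproves the example of \cite{PiZw07}. For general $k$, put $X=X_0\oplus\cdots\oplus X_0$ ($k+1$ copies, with the $\ell^\infty$-sum norm) and $\tilde A=A_0\otimes I+I\otimes N$, where $N$ is the nilpotent $(k{+}1)\times(k{+}1)$ Jordan block. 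Then $\tilde T(t)=T_0(t)\otimes e^{tN}$ defines a $C_0$-semigroup with $\|\tilde T(t)\|\le\sum_{j=0}^k t^j/j!=O(t^k)$, and $1\in\rho(\tilde A)$ since $\tilde A-I$ is the sum of the invertible $(A_0-I)\otimes I$ and a commuting nilpotent. Applying Step~1 to $\tilde T$ and reading off the $(1,k{+}1)$-block of $\tilde V^n$ (via the norm-one coordinate projection and inclusion of $X_0$), this block equals $-\frac{2}{k!}$ times convolution with $L_{n-1}^1(2\cdot)e^{-\cdot}(\cdot)^k\Eins_{[0,\infty)}$, whose $L^1(\R)$-norm is $\frac{2}{k!}\int_0^\infty|L_{n-1}^1(2t)|e^{-t}t^k\,\d t\ge \frac{2C_1}{k!}(n-1)^k\sqrt{n-1}$ by Lemma~\ref{la:Laguerre}. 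Hence $\|\tilde V^n\|\ge C_2 n^{k+\frac12}$ for all $n$ (adjusting $C_2$ for small $n$), while $\|\tilde T(t)\|=O(t^k)$, which is the second assertion.

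The only genuinely delicate point is the passage in Step~1: making the Laguerre integral representation rigorous for a merely polynomially bounded (rather than bounded) semigroup, where the naive interchange of summation and integration breaks down. Granting that, both bounds are direct consequences of Lemma~\ref{la:Laguerre} together with the elementary identification of convolution norms on $L^1(\R)$.
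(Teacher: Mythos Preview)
Your argument is correct. The upper bound (Steps~1--2) is exactly the paper's: both rely on the Laguerre integral representation \eqref{eq:5.1} and then apply Lemma~\ref{la:Laguerre}; you add a self-contained derivation of \eqref{eq:5.1} via Cauchy's formula whereas the paper simply cites it.

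The genuine difference lies in the optimality example. The paper works on $X_0=C_0([0,\infty))$ with the left shift, chooses the vector $h(\eta)=\mathrm{sign}\,L_{n-1}^1(2\eta)$ to extract the $L^1$-norm of the Laguerre kernel at $\eta=0$, and must then patch the fact that this $h$ is neither continuous nor vanishing at infinity by an approximation argument (appealing to \cite{zwart:2007-BCP}). You instead take $X_0=L^1(\R)$ with translation, observe that $V^n$ acts by convolution with the finite measure $\delta_0-2L_{n-1}^1(2\cdot)e^{-\cdot}\Eins_{[0,\infty)}\d t$, and read off the operator norm as the total variation, which is \emph{exactly} $1+2\int_0^\infty|L_{n-1}^1(2t)|e^{-t}\,\d t$. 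This is cleaner: no approximation is needed, and for general $k$ the $(1,k{+}1)$ block of the Jordan lift is again a pure convolution whose norm is exactly the desired Laguerre integral. Both constructions are the same block upper-triangular (Jordan) lift of a shift/translation generator; the paper's $C_0$-example has the minor advantage of producing a semigroup whose norm visibly equals $\sum_{j\le k}t^j/j!$, but your $L^1$-example gives the sharp cogenerator bound without any $\varepsilon$-fuzz.
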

\begin{proof}
The proof is based on the following relation between the semigroup and the cogenerator
\begin{equation}
  \label{eq:5.1}
  V^n h = h - 2 \int_0^{\infty} L_{n-1}^1(2t) e^{-t} T(t) h dt,
\end{equation}
where $L_n^1(t)$ is again the first generalised Laguerre polynomial, see
e.g.\ Gomilko \cite{gomilko:2004} or Butzer and Westpal \cite{BuWe70}.

Using the fact that the semigroup is of polynomial growth we find that
\begin{equation}
  \label{eq:5.3}
     \|V^n \| \leq 1 + 2 M \int_0^{\infty}\left| L_{n-1}^1(2t)\right| e^{-t} (1+t^k) dt.
\end{equation}
Now by Lemma \ref{la:Laguerre} we conclude that $\|V^n\| \leq C_1 n^{k+\frac{1}{2}}$ some $C_1$ and every $n\in\N$.

It remains to show that this estimate is sharp. Let 
$X_0:=C_0([0,\infty))$ be the Banach space of continuous functions on $[0,\infty)$ vanishing at
infinity, considered with the maximum-norm, $\|\cdot\|_{\infty}$. 
Let $(T_0(t))_{t\geq 0}$ be the left-shift semigroup on $X_0$, i.e.,
$\left(T_0(t)f\right)(\eta)= f(t+\eta)$. This is a strongly
continuous, contractive semigroup on $X_0$. 

As Banach space $X$ we take now $(k+1)$ copies of $X_0$, again with
the maximum norm $\|x\|_{X}= \max_{m=1,\ldots,k+1} \|x_m\|_{\infty}$. The
infinitesimal generator $A$ is given by
\[
  A = \left( \begin{array}{ccccc} A_0 & I & 0 & \cdots & 0\\ 0 & A_0 & I & & \vdots\\ \vdots && \ddots &\ddots & \\
  \vdots& && A_0 & I\\ 0 &\cdots& 0 & 0 &A_0 \end{array} \right),
\]
where $A_0$ is the infinitesimal generator of $T_0(t)$. The semigroup generated by $A$ is given by
\begin{equation}
\label{eq:5.4}
  T(t) = \left( \begin{array}{ccccc} T_0(t) & t T_0(t) & \frac{t^2}{2!} T_0(t) & \cdots & \frac{t^k}{k!} T_0(t)\\ 0 & T_0(t) & tT_0(t) & & \vdots\\ \vdots && \ddots &\ddots & \\
  \vdots& && T_0(t) & tT_0(t)\\ 0 &\cdots& 0 & 0 &T_0(t) \end{array} \right).
\end{equation}
Since $T_0(\cdot)$ is a contraction semigroup, it is easy to see that $\|T(t)\| \approx t^k$ for $t$ large. 

In order to give the idea of the further construction, we first choose
as $h$ in (\ref{eq:5.1}) the following function $h(\eta)=
\left(0,\cdots,0, \mathrm{sign}(L_{n-1}^1(2\eta))\right)^T$. Using the
definition of $T(\cdot)$, $T_0(\cdot)$ and (\ref{eq:5.1}), we have
\[
  \left(V^nh \right)(\eta) = \left(\begin{array}{c} 0\\ 0 \\ \vdots \\0\\  \mathrm{sign}(L_{n-1}^1(2\eta))\end{array}\right) - 2 \int_0^{\infty} L_{n-1}^1(2t) e^{-t}  \left(\begin{array}{c} \frac{t^k}{k!} \cdot \mathrm{sign}(L_{n-1}^1(2(t+\eta))\\ \frac{t^{k-1}}{(k-1)!} \cdot \mathrm{sign}(L_{n-1}^1(2(t+\eta))\\ \vdots \\ t \cdot \mathrm{sign}(L_{n-1}^1(2(t+\eta))\\ \mathrm{sign}(L_{n-1}^1(2(t+\eta)) \end{array}\right) dt.
\]
So
\begin{eqnarray}
  \nonumber
  \|V^nh\|_{X} &\geq& 
  \|(V^n h)_1\|_{\infty} 
  \geq |(V^n h)_1(0)| =
  2 \int_0^{\infty} L_{n-1}^1(2t) e^{-t}  \frac{t^k}{k!} \cdot \mathrm{sign}(L_{n-1}^1(2t)) dt\\
  \label{eq:5.5-new}
   &=& \frac{2}{k!} \int_0^{\infty} |L_{n-1}^1(2t)| e^{-t} t^k dt.
\end{eqnarray}
Since by Lemma \ref{la:Laguerre} this last integral behaves like
$n^{k}\sqrt{n}$, we see that the estimate is sharp. Since $h$ is not
continuous and is not vanishing at infinity, we see that the above
construction is not finished. However, for every $\varepsilon>0$ one
can find a $h_{\varepsilon} \in X$ such that the equality in
(\ref{eq:5.5-new}) holds within an error margin of $\varepsilon$, see
Example 3.5 of \cite{zwart:2007-BCP}.
\end{proof}

\begin{remark}
  For Hilbert spaces one can obtain a sharper result. For $k=0$, i.e.,
  for bounded semigroups, Gomilko \cite{gomilko:2004} proved that on
  Hilbert spaces $\|V^n\|$ grows at most like $\ln(n+1)$. It is unknown
  whether this result is optimal.
\end{remark}

Our next result shows that for analytic semigroups the converse
implication in Theorem \ref{thm:5.1-new} holds, i.e., polynomial
growth of the cogenerator implies polynomial growth of the
semigroup.
%
\begin{thm}
Let $T(\cdot)$ be an analytic $C_0$-semigroup on a Banach space with
cogenerator $V$. 
If $\|V^n\|\leq C n^k$ for some $C,k\geq 0$ and every $n\in \N$, then
$\|T(t)\|\leq M(1+t^{2k+1})$ for some $M$ and every $t\geq 0$.
\end{thm}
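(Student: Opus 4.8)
The plan is to invert the relation between the semigroup and its cogenerator: just as \eqref{eq:5.1} expresses $V^n$ in terms of $T(t)$, one has a representation of $T(t)$ in terms of the powers $V^n$, namely a Laguerre-type series
\[
  T(t) h = e^{-t} \sum_{n=0}^{\infty} \ell_n(2t)\, V^n h,
\]
where the $\ell_n$ are (scaled) Laguerre polynomials; this is the standard formula dual to the Cayley/cogenerator calculus (see Gomilko \cite{gomilko:2004}, Butzer and Westpal \cite{BuWe70}). By itself, inserting $\|V^n\|\leq C n^k$ into this series does not converge absolutely, so the analyticity hypothesis must be exploited. The point of analyticity is that $V$ itself then has extra regularity: if $T(\cdot)$ is analytic and $\|V^n\|\leq Cn^k$, then $V-I$ maps $X$ into the domain of $A$ with good estimates, and more precisely $\|V^n(V-I)\|$ decays faster than $\|V^n\|$ — this kind of improvement for analytic semigroups is exactly what underlies Gomilko's and Guo--Zwart's results quoted in the introduction. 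So the first step is to record a smoothing estimate: analyticity plus $\|V^n\|\le Cn^k$ yields $\|V^n(I-V)\|\leq C' n^{k-1}$ (up to logarithmic factors), and iterating, $\|V^n(I-V)^j\|\leq C_j n^{k-j}$ for $j\le k$.

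The second step is to convert this into a resolvent bound near the spectral point corresponding to $t\to\infty$. Recall $A=(V+I)(V-I)^{-1}$ and, by Lemma \ref{lemma:resolvent}, $R(\lambda,A)=\frac{1}{\lambda-1}(I-V)R\!\left(\frac{\lambda+1}{\lambda-1},V\right)$. Power bounds on $V$ give, via the usual Cauchy-integral/Abel-summation argument, estimates on $R(\mu,V)$ for $\mu$ near the unit circle, in particular near $\mu=1$; combined with the smoothing factor $(I-V)$ this translates into a bound of the form $\|R(\lambda,A)\|\leq C|\lambda|^{k}$ for $\lambda$ in a region of the right half-plane approaching the imaginary axis. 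More precisely, since $T(\cdot)$ is analytic, $\sigma(A)$ lies in a sector and $R(\lambda,A)$ is controlled on the boundary; the cogenerator estimate upgrades the growth exponent there to be polynomial of degree $k$ in $|\lambda|$ near the imaginary axis (and near $\lambda=1$, which corresponds to $t\to 0^+$, the bound is harmless since $T(\cdot)$ is bounded there).

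The third step is to feed this resolvent bound into a known characterisation of polynomial growth via the resolvent of the generator — precisely the results cited just before Lemma \ref{la:Laguerre}: Malejki \cite{malejki:2001}, Eisner \cite{eisner:2005}, Eisner--Zwart \cite{eisner/zwart:2007}. Those theorems say (roughly) that if $\{\Re\lambda>0\}\subset\rho(A)$ and $\|R(\lambda,A)\|\leq C(1+|\lambda|)^{m}/\Re\lambda$-type bounds hold, or the appropriate sectorial/analytic version thereof, then $\|T(t)\|\leq M(1+t^{m+1})$. Here the resolvent grows like $|\lambda|^{k}$ along the relevant part of the spectrum, and for analytic semigroups one gains one more power from the contour-integral representation $T(t)=\frac{1}{2\pi i}\int_\Gamma e^{\lambda t}R(\lambda,A)\,d\lambda$ after deforming $\Gamma$; careful bookkeeping of the two factors of $k$ (one from the smoothing estimates used to reach the imaginary axis, one from the raw power bound) together with the extra ``$+1$'' from the analytic contour integration yields the exponent $2k+1$. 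I expect the main obstacle to be precisely the first step: making the passage from the discrete power bound $\|V^n\|\leq Cn^k$ to a resolvent/sectorial bound for $A$ quantitatively sharp enough (handling the logarithmic losses in the smoothing estimates and the behaviour of $R(\mu,V)$ as $\mu\to 1$ along the real axis) so that the final exponent comes out as $2k+1$ rather than something larger.
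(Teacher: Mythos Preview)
Your second and third steps identify the right tools --- Lemma~\ref{lemma:resolvent} to pass from $R(\mu,V)$ to $R(\lambda,A)$, and the Eisner--Zwart resolvent characterisation of polynomial growth --- but the quantitative details are off, and your first step is a detour that the paper does not take and that you do not need.

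Concretely: the paper never uses the Laguerre series for $T(t)$ or any smoothing estimate of the type $\|V^n(I-V)^j\|\le C_j n^{k-j}$ (which is not obviously available from the hypotheses). The route is much more direct. From $\|V^n\|\le Cn^k$ one gets straight away, via the Neumann series (this is \cite[Thm.~2.4]{eisner/zwart:2007}), that
\[
  \|R(\mu,V)\|\le \frac{\tilde C}{(|\mu|-1)^{k+1}}\qquad\text{for }1<|\mu|\le 2.
\]
Insert this into the identity of Lemma~\ref{lemma:resolvent} and compute, for $\mu=(\lambda+1)/(\lambda-1)$, that $|\mu|-1=\dfrac{4\Re\lambda}{|\lambda-1|(|\lambda+1|+|\lambda-1|)}$; on a bounded rectangle $0<\Re\lambda<\tfrac13$, $|\Im\lambda|\le b_0$ this gives
\[
  \|R(\lambda,A)\|\le \frac{M}{(\Re\lambda)^{k+1}}.
\]
Analyticity enters only to say that \emph{outside} such a rectangle (large $|\Im\lambda|$, or $\Re\lambda$ bounded away from $0$) the resolvent is already uniformly bounded. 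Then \cite[Thm.~2.1]{eisner/zwart:2007} turns the bound $\|R(\lambda,A)\|\le M(\Re\lambda)^{-(k+1)}$ for an analytic semigroup directly into $\|T(t)\|\le M'(1+t^{2k+1})$.

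Two points to correct in your write-up: (i) the relevant resolvent bound is $\|R(\lambda,A)\|\lesssim (\Re\lambda)^{-(k+1)}$, a blow-up as $\Re\lambda\to 0^+$, not a polynomial bound $\lesssim|\lambda|^k$ as $|\lambda|\to\infty$; (ii) the exponent $2k+1$ does not arise from ``two factors of $k$'' plus one from contour integration --- it is simply what the cited characterisation outputs when fed a resolvent exponent $k+1$. Once you drop the Laguerre/smoothing preamble and fix the form of the resolvent estimate, the argument is a short direct computation.
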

\begin{proof}
Assume $\|V^n\|\leq C n^k$ for some $C,k\geq 0$ and every $n\in \N$.
Then $r(V)\leq 1$ and therefore  $\lambda\in\rho(A)$ for 
$\Re\lambda>0$ by Lemma  \ref{lemma:resolvent}. 
Our aim is to show that there exist $a_0,M>0$ such that
\begin{eqnarray}
&\ &\|R(\lambda,A)\| \leq \frac{M}{(\Re\lambda)^{k+1}} \text{ for all }
\lambda \text{ with } 0<\Re\lambda<a_0, \label{eq:res-generator-power-bdd-1} \\
&\ &\|R(\lambda,A)\| \leq M  \text{ for all } \lambda \text{ with }
\Re\lambda\geq a_0. \label{eq:res-generator-power-bdd-2}
  \end{eqnarray} 
By Eisner, Zwart \cite[Thm. 2.1]{eisner/zwart:2007} this implies 
growth at most like $t^{2k+1}$ for analytic semigroups. 

Take some $a_0>\max\{0,\omega_0(T)\}$. Then
(\ref{eq:res-generator-power-bdd-2}) automatically holds and
we only have to show (\ref{eq:res-generator-power-bdd-1}).

Since $T(\cdot)$ is analytic, $R(\lambda,A)$ is uniformly bounded on 
$\{\lambda:|\Im \lambda|>b_0,\ 0<\Re\lambda<a_0\}$ for some $b_0\geq 0$. 
Moreover, $R(\lambda,A)$ is also uniformly bounded on 
$\{\lambda:|\Im \lambda|\leq b_0,\ \frac{1}{3}\leq \Re\lambda<a_0\}$
as well. 
Take now $\lambda$ with $0<\Re\lambda<\frac{1}{3}$ and
$-b_0<\Im\lambda<b_0$. 

By Lemma \ref{lemma:resolvent} we have 
\begin{equation}\label{eq:resolvent-gen-cogen}
\|R(\lambda,A)\|\leq \frac{1+\|V\|}{|\lambda-1|}\left\|R\left(\frac{\lambda+1}{\lambda-1},V\right)\right\|.
\end{equation} 
By Eisner, Zwart \cite[Thm. 2.4]{eisner/zwart:2007}, growth of $\|V^n\|$ like $n^k$ implies 
\begin{eqnarray}\label{eq:pol-bdd-res-v}
\|R(\mu,V)\|\leq \frac{\tilde{C}}{(|\mu|-1)^{k+1}} \text{ for all } \mu \text{ with
} 1<|\mu|\leq 2
  \end{eqnarray}
for some constant $\tilde{C}$. For $\mu:=\frac{\lambda+1}{\lambda-1}$
and $0<\Re\lambda<\frac{1}{3}$ we have  
\begin{equation*}
|\mu|-1= \frac{|\lambda+1|-|\lambda-1|}{|\lambda-1|}= \frac{4\Re
  \lambda}{|\lambda-1|(|\lambda+1|+|\lambda-1|)}
<1.
\end{equation*}
Then we use (\ref{eq:pol-bdd-res-v}) to obtain 
\begin{equation*}
\left\|R\left(\frac{\lambda+1}{\lambda-1},V\right)\right\|\leq
\frac{\tilde{C}|\lambda-1|^{k+1}(|\lambda+1|+|\lambda-1|)^{k+1}}{4^{k+1} (\Re\lambda)^{k+1}}
\leq \frac{C_1}{(\Re\lambda)^{k+1}}
  \end{equation*}
for $C_1:= \tilde{C}(b_0^2+\frac{5}{4})^{k+1}$. 
So, by (\ref{eq:resolvent-gen-cogen}),
\begin{equation*}
\|R(\lambda,A)\|\leq \frac{C_1(1+\|V\|)}{|\lambda-1|(\Re\lambda)^{k+1}}
\leq \frac{C_1(1+\|V\|)}{2(\Re\lambda)^{k+1}}
  \end{equation*}
which proves (\ref{eq:res-generator-power-bdd-1}). 
\end{proof} 


\vspace{0.1cm}

\noindent {\bf Acknowledgement.} 
The authors are grateful to the referee for pointing out some missing steps in the first version of the paper.

\parindent0pt

\end{document}